\definecolor{red}{rgb}{1,0,0}
\definecolor{green}{rgb}{0,1,0}
\definecolor{blue}{rgb}{0,0,1}
\definecolor{refkey}{gray}{.625}
\definecolor{labelkey}{gray}{.625}
\DeclareMathOperator{\id}{id}
\DeclareMathOperator{\pr}{pr}
\DeclareMathOperator{\Der}{Der}
\DeclareMathOperator{\Bott}{Bott}
\DeclareMathOperator{\Td}{Td}
\DeclareMathOperator{\tr}{tr}
\DeclareMathOperator{\str}{str}
 \def\title@font{\normalsize\bfseries}
 \let\ltx@maketitle\@maketitle
 \def\@maketitle{\bgroup
 \let\ltx@title\@title
 \def\@\title{\resizebox{\textwidth}{!}{
  \mbox{\title@font\ltx@title}
 }}
 \ltx@maketitle
 \egroup}
\newcommand{\abs}[1]{\lvert#1\rvert}
\newcommand{\T}{\mathcal{T}}
\newcommand{\R}{\mathbb{R}}
\newcommand{\At}{\operatorname{At}}
\numberwithin{equation}{section}
\theoremstyle{plain}
\newtheorem{corollary}[equation]{Corollary}
\newtheorem{theorem}[equation]{Theorem}
\newtheorem{lemma}[equation]{Lemma}
\newtheorem{proposition}[equation]{Proposition}
\theoremstyle{definition}
\newtheorem{definition}[equation]{Definition}
\newtheorem{example}[equation]{Example}
\theoremstyle{remark}
\newtheorem{remark}[equation]{Remark}
\begin{document}
\def\C{\mathbb{C}}
\def\CE{\mathrm{CE}}
\def\D{\mathcal{D}}
\def\E{\mathcal{E}}
\def\F{\mathcal{F}}
\def\G{\mathcal{G}}
\def\M{\mathcal{M}}
\def\N{\mathcal{N}}
\def\O{\mathcal{O}}
\def\Q{\mathcal{Q}}
\def\P{\mathcal{P}}
\def\T{\mathcal{T}}
\def\U{\mathcal{U}}
\def\V{\mathscr{V}}
\def\L{\mathcal{L}}
\def\W{\mathcal{W}}
\def\X{\mathcal{X}}
\def\Y{\mathcal{Y}}
\def\Im{\operatorname{Im}}
\def\End{\operatorname{End}}
\def\Ber{\operatorname{Ber}}
\def\Hom{\operatorname{Hom}}
\def\Bott{\operatorname{Bott}}
\def\sgn{\operatorname{sgn}}
\def\rk{\operatorname{rank}}
\def\ch{\operatorname{ch}}
\newcommand{\bas}{\mathrm{bas}}
\def\Mod{\operatorname{Mod}}

\title{Atiyah and Todd classes of regular Lie algebroids}

\author{Maosong Xiang}
\address{School of Mathematics and Statistics, Center for Mathematical Sciences, Huazhong University of Science and Technology}
\email{\href{mailto: msxiang@hust.edu.cn}{msxiang@hust.edu.cn}}
\thanks{Research partially supported by NSFC grant 11901221}

\begin{abstract}
  For any regular Lie algebroid $A$, the kernel $K$ and the image $F$ of its anchor map $\rho_A$, together with $A$ itself fit into a short exact sequence, called Atiyah sequence, of Lie algebroids.
  We prove that Atiyah and Todd classes of dg manifolds arising from regular Lie algebroids respect the Atiyah sequence. That is, the Atiyah and Todd classes of $A$ restrict to the Atiyah and Todd classes of the bundle $K$ of Lie algebras on the one hand, and project onto the Atiyah and Todd classes of the integrable distribution $F \subseteq T_M$ on the other hand.
\end{abstract}

\maketitle

\subjclass{{\em Mathematics Subject Classification}(2020): Primary 53D17; Secondary 57R20.}\\
\keywords{{\em Keywords}: Atiyah classes, Todd classes, regular Lie algebroids, dg manifolds.}

\tableofcontents

\section{Introduction}
The Atiyah class was introduced by Atiyah~\cite{Atiyah} to characterize the obstruction to the existence of a holomorphic connection on a holomorphic vector bundle.  Kapranov~\cite{Kap} showed that the Atiyah class of a K\"{a}hler manifold $X$ induces an $L_\infty$ algebra structure on the shifted tangent complex $\Omega^{0,\bullet-1}_X(T_X)$, which plays an important role in his  reformulation of Rozansky-Witten theory. In his celebrating work~\cite{Kon}, Kontsevich found a deep link between the Todd genus of complex manifolds and the Duflo element of Lie algebras.
Liao, Sti\'{e}non and Xu explained in~\cite{LSX} this link via the formality theorem for smooth differential graded (dg for short) manifolds, where the Atiyah class of a dg vector bundle introduced by Mehta, Sti\'{e}non and Xu in~\cite{MSX} is an essential ingredient. See the survey article~\cite{SXsurvey} on Atiyah classes of dg manifolds and how they work in the Duflo-Kontsevich type theorems for dg manifolds.

A dg manifold (or a $Q$-manifold) is a $\mathbb{Z}$-graded smooth manifold equipped with a homological vector field $Q$, i.e., a degree $+1$ derivation of square zero on the algebra of functions. Dg manifolds arise naturally in many situations in geometry, Lie theory, and mathematical physics.
For example, according to Va\u{i}ntrob~\cite{Vai}, to any Lie algebroid $A$ is associated a dg manifold $(A[1], d_A)$, where the homological vector field $d_A$ is identified with Chevalley-Eilenberg differential of the Lie algebroid $A$. In fact, any homological vector field on the graded manifold $A[1]$ arises from a Lie algebroid structure on the vector bundle $A$ in this manner.
Since graded manifolds enjoy many similar properties as smooth manifolds, such as the existence of connections, curvatures, etc.,  one is able to generalize the same constructions in topology and differential geometry, for instance, characteristic classes by the Chern-Weil theory, to dg manifolds (cf.~\cites{CM2019, KS}).

In this paper, we study the Atiyah and Todd classes of dg manifolds arising from regular Lie algebroids.
A Lie algebroid $(A,\rho_A, [-,-]_A)$ over a smooth manifold $M$ is said to be regular if its anchor $\rho_A$ is of constant rank. The kernel $K = \ker(\rho_A)$ together with the restriction $[-,-]_K$ of the Lie bracket $[-,-]_A$ onto $\Gamma(K)$ is a bundle of Lie algebras; the image $F = \operatorname{Im}(\rho_A) \subseteq T_M$, being as the tangent bundle of the regular characteristic foliation, is a Lie subalgebroid of the tangent Lie algebroid $T_M$.
In other words, there is a short exact sequence of Lie algebroids over $M$
\[
  0 \rightarrow K \xrightarrow{i} A \xrightarrow{\rho_A} F \rightarrow 0,
\]
known as Atiyah sequence of $A$ (cf.~\cite{Kub}).
The main purpose of this paper is to investigate whether Atiyah classes of dg manifolds arising from regular Lie algebroids respect the above Atiyah sequence.
Our main theorem states that the Atiyah class of $A$ restricts to the Atiyah class of the bundle $K$ of Lie algebras along the inclusion $i$ on the one hand, and projects onto the Molino class~\cite{Molino} of the integrable distribution $F \subseteq T_M$ on the other hand (see Theorem~\ref{Thm:Funtoriality}).

For this purpose, observe that both $K$ and the quotient bundle $B := T_M/F$ carry canonical $A$-module structures. Thus, the pullback $\pi^\ast(E)$ of the Whitney sum $E:= K[1] \oplus B$ along the bundle projection $\pi \colon A[1] \to M$, equipped with the Chevalley-Eilenberg differential $d_{\CE}$ of the $A$-module $E$, is a dg vector bundle over $(A[1], d_A)$.
Combining results in~\cite{AC} and in~\cite{GsM},
there exists a contraction of dg vector bundles for the tangent dg vector bundle of $(A[1],d_A)$
\[
  \begin{tikzcd}
  (T_{A[1]}, L_{d_A}) \arrow[loop left, distance=2em, start anchor={[yshift=-1ex]west}, end anchor={[yshift=1ex]west}]{}{H} \arrow[r,yshift = 0.7ex, "\Phi"] &   (\E:= \pi^\ast(E), Q_\E:= d_{\CE} - \Omega) \arrow[l,yshift = -0.7ex, "\Psi"],
\end{tikzcd}
\]
where
\[
-\Omega \in \Omega_A^2(\Hom(B, K[1])) := \Gamma(\wedge^2A^\ast \otimes \Hom(B, K[1]))
\]
is a perturbation of the differential $d_{\CE}$.
Via this contraction, we give an explicit description of the Atiyah class of the dg manifold $(A[1],d_A)$ (see Theorem~\ref{Theorem on Atiyah class of A[1]}).
In particular, we obtain the Atiyah class of a bundle of Lie algebras (see Proposition~\ref{prop: Atiyah class of bundle of Lie algebras}), and rediscover the fact in~\cite{CXX} that the Atiyah class of the dg manifold arising from an integrable distribution $F \subseteq T_M$ is related via a canonical isomorphism to the Atiyah class of the Lie pair $(T_M, F)$ introduced in~\cite{CSX} (see Proposition~\ref{prop: Atiyah class of foliation}).
Since scalar Atiyah classes and Todd classes are generated by Atiyah classes, we also prove that all scalar Atiyah classes and Todd classes of a regular Lie algebroid $A$ respect the Atiyah sequence of $A$ (see Proposition~\ref{prop: functoriality on scalar Atiyah classes} and Proposition~\ref{prop: functoriality of Todd classes}).

Note that the perturbation term $-\Omega \in \Omega_A^2(\Hom(B,K[1]))$ is indeed $d_{\CE}$-closed. It was proved in~\cite{GsM} that the Chevalley-Eilenberg cohomology class
\[
\omega  = [\Omega] \in H_{\CE}^1(A;\Hom(B,K[1])) \cong  H_{\CE}^2(A;\Hom(B,K))
\]
is a characteristic class of $A$. In fact, the cohomology class $\omega$ measures whether the Lie algebroid structure locally split around leaves of the submanifold foliated from $F$.
More precisely, according to Theorem 7.3 in~\cite{GsM}, $\omega$ vanishes if and only if around each leaf $L$ foliated from $F$, there exists a tubular neighborhood $U \subset M$ of $L$ and an identification $U = L \times N$ such that the restriction $A\!\mid_{U}$ is isomorphic to the cross product of $A\!\mid_L$ and the trivial Lie algebroid over $N$.
For this reason, we say that a regular Lie algebroid \emph{locally split}, if the cohomology class $\omega$ vanishes.
For a locally splittable regular Lie algebroid $A$, we prove that the kernel $K$ of the anchor map is a Lie algebra bundle, and the Atiyah class of $A$ only consists of two components: one is
the Atiyah class of $K$ which is represented by the Lie bracket on $\Gamma(K)$; the other is the Atiyah class of its characteristic distribution $F$, which is related via a canonical isomorphism to the Atiyah class of the Lie pair $(T_M, F)$. The Todd class of $A$ is given by the product of the Todd class of this Lie algebra bundle $K$ that is represented by the Duflo element, and the Todd class of the characteristic distribution $F$ (see Theorem~\ref{Thm B}).

\paragraph{\bf Acknowledgement} 
I would like to thank Zhuo Chen, Yu Qiao, and Ping Xu for helpful discussions and comments.
I am also grateful to the two anonymous referees for valuable suggestions and comments to improve the presentation of the manuscript.

\section{Atiyah and Todd classes of dg vector bundles}
\subsection{Dg manifolds and dg vector bundles}
In this section, we briefly recall the definitions of dg manifolds and dg vector bundles. See~\cites{LMS, SSX} for details.

Let $M$ be a smooth manifold, and $\O_M$ be the sheaf of smooth functions over $M$.
By a ($\mathbb{Z}$-) graded manifold $\M$ with support $M$, we mean a pair $(M,\O_\M)$, where $\O_\M$ is a sheaf of $\mathbb{Z}$-graded commutative $\O_M$-algebras over $M$ such that for every contractible open subset $U \subset M$, $\O_\M(U)$ is locally isomorphic to $C^\infty(U) \otimes \widehat{SV}$ for a fixed finite dimensional $\mathbb{Z}$-graded vector space $V$. The space $C^\infty(\M) = \Gamma(\O_\M)$ of global sections of the structure sheaf $\O_\M$ will be referred as the algebra of smooth functions on $\M$.
A morphism $\phi$ of graded manifolds from $\M$ to $\N$ is a pair $(\phi_0,\phi^\ast)$, where $\phi_0\colon M \rightarrow N$ is a smooth map of base manifolds, and $\phi^\ast\colon \O_\N \rightarrow \O_\M$ is morphism of structure sheaves covering $\phi_0$.
In particular, $\phi^\ast$ induces a morphism of algebras of smooth functions $\phi^\ast\colon C^\infty(\N) \rightarrow C^\infty(\M)$.
The collection of graded manifolds and their morphisms constitute a category, called the category of graded manifolds.

A dg manifold is a graded manifold $\M$ together with a homological vector field, i.e., a vector field $Q_\M$ of degree $+1$ satisfying the integrable condition $[Q_\M, Q_\M] = 0$. In other words, the vector field $Q_\M$ arises from an infinitesimal action of the super Lie group $\R^{0\mid1}$ on the graded manifold $\M$. The associated cohomology
\[
 H^\bullet(\M) := H^\bullet(C^\infty(\M), Q_\M)
\]
is called the cohomology of the dg manifold $(\M,Q_\M)$.
A morphism $\phi$ of dg manifolds from $(\M, Q_\M)$ to $(\N, Q_\N)$ is a morphism of the underlying graded manifolds such that the morphism $\phi^\ast$ of algebras of smooth functions is a cochain map, i.e.,
\[
 \phi^\ast \circ Q_\N = Q_\M \circ \phi^\ast\colon C^\infty(\N) \rightarrow C^\infty(\M).
\]
The collection of dg manifolds together with their morphisms form a category, called the category of dg manifolds.

A dg vector bundle is an $\R^{0\mid1}$-equivariant vector bundle in the category of graded manifolds, or equivalently, a vector bundle in the category of dg manifolds, that is,  a graded vector bundle $\pi \colon \E \rightarrow \M$, equipped with two homological vector fields $Q_\E$ and $Q_\M$ coming from infinitesimal actions of $\R^{0\mid1}$ on $\E$ and $\M$, respectively, such that the bundle projection $\pi$ is morphism of dg manifolds. Here the $\R^{0\mid1}$-action on $\E$ is required to be fibre-preserving and fiberwise linear. In other words, the homological vector field $Q_\E$ is linear, with $Q_\M$ being its restriction onto the base manifold $\M$.
\begin{example}
Let $(\M, Q_\M)$ be a dg manifold. Then the vector bundle $(T^\ast_\M)^{\otimes p} \otimes (T_\M)^{\otimes q}$ together with the Lie derivative $L_{Q_\M}$ along the homological vector field $Q_\M$ is a dg vector bundle over $(\M, Q_\M)$ for all $p,q \geq 0$.
\end{example}

\subsection{Atiyah and Todd classes}
Let $(\E, Q_\E)$ be a dg vector bundle over a dg manifold $(\M, Q_\M)$. Mehta, Sti\'{e}non and Xu introduced in~\cite{MSX} (see also~\cite{LMS}) a cohomology class $\At_{(\E,Q_\E)}$, called the Atiyah class of $(\E,Q_\E)$, to measure the obstruction to the existence of a linear connection on the graded vector bundle $\E$ that is compatible with the homological vector fields $Q_\E$ and $Q_\M$.
More precisely, to any linear connection $\nabla$ on the graded vector bundle $\E$ is associated a degree $+1$ element $\At_{(\E,Q_\E)}^\nabla \in \Gamma(T^\ast_\M \otimes \End(\E))$ defined by
\begin{equation}\label{Atiyah cocycle}
 \At_{(\E,Q_\E)}^\nabla(X,e) = \Q_\E(\nabla_X e) - \nabla_{\Q_\M(X)} e - (-1)^{\abs{X}}\nabla_X Q_\E(e),
\end{equation}
for all homogeneous $X \in \Gamma(T_\M)$ and $e \in \Gamma(\E)$.
This element is closed under the homological vector field of the dg vector bundle $T^\ast_\M \otimes \End(\E)$. Its cohomology class
\[
\At_{(\E, Q_\E)} := [\At_{(\E,Q_\E)}^\nabla] \in H^1(\Gamma(T^\ast_\M \otimes \End(\E)))
\]
is independent of the choice of the linear connection $\nabla$, and is called the Atiyah class of the dg vector bundle $(\E, Q_\E)$.

For any positive integer $k$, one can form $\At^{k}_{(\E,Q_\E)}$, the image of $\At^{\otimes k}_{(\E,Q_\E)}$ under the natural map
\[
\otimes^k H^1(\Omega^1(\M) \otimes_{C^\infty(\M)} \Gamma(\End(\E))) \to H^k(\Omega^k(\M) \otimes_{C^\infty(\M)} \Gamma(\End(\E)))
\]
induced by the wedge product in the space $\Omega^\bullet(\M)$ of differential forms on $\M$ and the composition in $\End(\E)$.
The $k$-th scalar Atiyah class~\cite{MSX} of the dg vector bundle $(\E,Q_\E)$ is defined by
\[
  \ch_k(\E,Q_\E) := \frac{1}{k!}\left(\frac{i}{2\pi}\right)^k\str(\At_{(\E,Q_\E)}^k) \in H^k(\Omega^k(\M), L_{Q_\M}),
\]
where $\str \colon \Gamma(\End(\E)) \to C^\infty(\M)$ is the supertrace map (cf.~\cite{Manin}).

The Todd class of the dg vector bundle $(\E,Q_\E)$ is defined by
\[
 \Td_{(\E,Q_\E)} := \Ber\left(\frac{\At_{(\E,Q_\E)}}{1 - e^{-\At_{(\E,Q_\E)}}}\right) \in \prod_{k \geq 0} H^k(\Omega^k(\M), L_{Q_\M}),
\]
where $\Ber \colon \Gamma(\End(\E)) \to C^\infty(\M)$ is the Berezinian (or the superdeterminant) map (cf.~\cite{Manin}).

\subsection{Invariance under contractions}
A contraction of dg vector bundles over a dg manifold $(\M,Q_\M)$ from $(\E,Q_\E)$ to $(\F, Q_\F)$ is given by three bundle maps $(\phi,\psi, h)$ fitting in the following diagram
 \[
 \begin{tikzcd}
  (\E, Q_\E) \arrow[loop left, distance=2em, start anchor={[yshift=-1ex]west}, end anchor={[yshift=1ex]west}]{}{h} \arrow[r,yshift = 0.7ex, "\phi"] &   (\F, Q_\F) \arrow[l,yshift = -0.7ex, "\psi"],
\end{tikzcd}
\]
where both $\phi \colon (\E, Q_\E) \to (\F, Q_\F)$ and $\psi \colon (\F, Q_\F) \to (\E,Q_\E)$ are morphisms of dg vector bundles, and $h \colon \E \to \E[-1]$ is a bundle map, satisfying
\begin{align*}
  \phi \circ \psi &= \id_\F, & \psi \circ \phi &= \id_\E + [Q_\E, h] = \id_\E + Q_\E \circ h + h \circ Q_\E,
\end{align*}
and side conditions
\begin{align*}
  h^2 &= 0, & \phi \circ h &= 0, & h \circ \psi &= 0.
\end{align*}
Given a contraction $(\phi,\psi,h)$ of bounded dg vector bundles over $(\M,Q_\M)$ from $(\E,Q_\E)$ to $(\F, Q_\F)$, by taking dual we obtain a contraction $(\phi^\ast, \psi^\ast, h^\ast)$ of bounded dg vector bundles from $(\E^\ast, Q_{\E^\ast})$ to $(\F^\ast, Q_{\F^\ast})$.
Then applying the tensor product of these two contractions, we obtain the following contraction of bounded dg vector bundles over $(\M,Q_\M)$
  \[
 \begin{tikzcd}
  (\E^\ast \otimes \E \cong \End(\E), [Q_\E, -]) \arrow[loop left, distance=2em, start anchor={[yshift=-1ex]west}, end anchor={[yshift=1ex]west}]{}{H} \arrow[r,yshift = 0.7ex, "\Phi"] &   (\F^\ast \otimes F \cong \End(\F), [Q_\F,-]) \arrow[l,yshift = -0.7ex, "\Psi"],
\end{tikzcd}
\]
where the bracket $[-,-]$ means the graded commutator, and the three bundle maps are defined by
\begin{align*}
  \Phi &:= \psi^\ast \otimes \phi, & \Psi &:= \phi^\ast \otimes \psi, & H &:= (\psi\phi)^\ast\otimes h + h^\ast \otimes \id_\E.
\end{align*}
Passing to the cohomology of section spaces, we obtain an isomorphism
\begin{equation}\label{Eq: Isom from EndE to EndF}
  \Phi_{\E,\F}:= \id_{T^\ast_\M} \otimes \Phi \colon H^\bullet(\Gamma(T_\M^\ast \otimes \End(\E))) \xrightarrow{\quad \cong \quad} H^\bullet(\Gamma(T_\M^\ast \otimes \End(\F))).
\end{equation}
Atiyah classes of bounded graded vector bundles are invariant under contractions in the follow sense.
\begin{proposition}\label{prop: invariance of Atiyah class}
     Suppose that there exists a contraction $(\phi,\psi,h)$ of bounded dg vector bundles over $(\M, Q_\M)$ from $(\E, Q_\E)$ to $(\F, Q_\F)$. Then the isomorphism $\Phi_{\E,\F}$ in~\eqref{Eq: Isom from EndE to EndF} sends the Atiyah class $\At_{(\E,Q_\E)}$ of $(\E,Q_\E)$ to the Atiyah class  $\At_{(\F,Q_\F)}$ of $(\F,Q_\F)$.
\end{proposition}
\begin{proof}
Let us choose a $T_\M$-connection $\nabla^\F$ on the graded vector bundle $\F$. Via the inclusion $\psi \colon \F \to \E$ we may identify $\F$ as a subbundle of $\E$. For each direct sum decomposition $\E = \F \oplus \F^c$ of graded vector bundles, we choose a $T_\M$-connection $\nabla^c$ on the complement bundle $\F^c$ of $\F$ such that $\nabla^\F$ and $\nabla^c$ define a $T_\M$-connection $\nabla^\E$ on $\E$. This connection satisfies
\begin{equation}\label{Eq: connection on E and F}
  \nabla^\E_X \psi(v) = \psi (\nabla^\F_X v),
\end{equation}
for all $X \in \Gamma(T_\M)$ and all $v \in \Gamma(\F)$. Then for all homogeneous $X \in \Gamma(T_\M)$ and $v \in \Gamma(\F)$, we have
\begin{align*}
  &\quad \Phi_{\E,\F}(\At_{(\E,Q_\E)}^{\nabla^\E})(X, v) = \phi\left(\At_{(\E,Q_\E)}^{\nabla^\E}(X, \psi(v))\right) \\
    &= \phi\left(Q_\E (\nabla^\E_X \psi(v)) - \nabla^\E_{[Q_\M,X]}\psi(v) - (-1)^{\abs{X}}\nabla^\E Q_\E(\psi(v)) \right) \quad (\text{since $\psi$ is a cochain map})\\
  &= \phi\left(Q_\E (\nabla^\E_X \psi(v)) - \nabla^\E_{[Q_\M,X]}\psi(v) - (-1)^{\abs{X}}\nabla^\E \psi(Q_\F(v)) \right) \quad (\text{by Eq.~\eqref{Eq: connection on E and F}})\\
  &= \phi\left(Q_\E(\psi(\nabla^\F_X v)) - \psi(\nabla^\F_{[Q_\M,X]}v) - (-1)^{\abs{X}}\psi(\nabla^\F_X Q_\F(v))  \right) \quad  (\text{since $\psi$ is a cochain map})\\
  &= (\phi\circ\psi)\left( Q_\F(\nabla^\F_X v) - \nabla^\F_{[Q_\M,X]}v - (-1)^{\abs{X}} \nabla^\F_X Q_\F(v)\right) \\
  &=  Q_\F(\nabla^\F_X v) - \nabla^\F_{[Q_\M,X]}v - (-1)^{\abs{X}} \nabla^\F_X Q_\F(v) = \At_{(\F,Q_\F)}^{\nabla^\F}(X,v).
\end{align*}
Passing to the cohomology level, we conclude the proof.
\end{proof}

As an immediate consequence, we have the following invariance on scalar Atiyah and Todd classes.
\begin{corollary}\label{corollary on invariance of scalar Atiyah and Todd}
  Assume that there exists a contraction of bounded dg vector bundle from $(\E, Q_\E)$ to $(\F, Q_\F)$. Then we have for all positive integer $k$,
  \begin{align*}
    \ch_k(\E,Q_\E) &= \ch_k(\F, Q_\F),
 \end{align*}
 and
 \begin{align*}
     \Td_{(\E, Q_\E)} &= \Td_{(\F,Q_\F)}.
  \end{align*}
\end{corollary}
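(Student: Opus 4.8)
The plan is to reduce both invariances to the single statement that the supertrace, viewed as a functional on the cohomology of the endomorphism complex, is intertwined by the map induced from the contraction. I first observe that the target $H^k(\Omega^k(\M), L_{Q_\M})$ in which the scalar Atiyah classes live depends only on the base dg manifold $(\M, Q_\M)$ and not on the bundle, and the same holds for the target of the Todd class. Consequently the asserted isomorphisms are genuine equalities of cohomology classes, and it suffices to prove that $\str(\At_{(\E,Q_\E)}^k) = \str(\At_{(\F,Q_\F)}^k)$ for every $k \geq 1$, together with the corresponding Berezinian identity.

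I would then work with the contraction $(\Phi, \Psi, H)$ of endomorphism bundles constructed in the proof of Proposition~\ref{prop: invariance of Atiyah class}, for which $\Phi(T) = \phi \circ T \circ \psi$, with $\Phi \circ \Psi = \id$ and $\Psi \circ \Phi = \id + [Q_\E, H]$. Two points must be established at the level of the cohomology $H^\bullet(\Gamma(\End(\E)), [Q_\E, -])$, which is an algebra under composition. First, $\Phi$ descends to an algebra homomorphism: using $\psi \circ \phi = \id_\E + [Q_\E, h]$ one writes $\Phi(S) \circ \Phi(T) = \Phi(S \circ T) + \phi \circ S \circ [Q_\E, h] \circ T \circ \psi$, and when $S$ and $T$ are $[Q_\E,-]$-closed the correction is the coboundary $[Q_\F, \phi \circ S \circ h \circ T \circ \psi]$ up to sign; hence $\Phi$ carries $[\At_{(\E,Q_\E)}]$ to $[\At_{(\F,Q_\F)}]$ by Proposition~\ref{prop: invariance of Atiyah class} and sends $k$-th powers to $k$-th powers. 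Second, the supertrace is intertwined: by the graded cyclicity of $\str$ one has $\str(\phi \circ T \circ \psi) = \str(T \circ \psi \circ \phi) = \str(T) + \str(T \circ [Q_\E, h])$, and for $[Q_\E,-]$-closed $T$ the last term equals $L_{Q_\M}(\str(T \circ h))$ up to sign and is therefore exact, so $\str \circ \Phi = \str$ on cohomology.

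Combining the two points gives, at the cohomology level,
\[
  \str(\At_{(\F,Q_\F)}^k) = \str\big(\Phi([\At_{(\E,Q_\E)}]^k)\big) = \str(\At_{(\E,Q_\E)}^k),
\]
which is precisely $\ch_k(\E, Q_\E) = \ch_k(\F, Q_\F)$ after restoring the normalizing constant $\tfrac{1}{k!}(\tfrac{i}{2\pi})^k$. For the Todd classes I would invoke the standard super--linear-algebra identity $\Ber(M) = \exp(\str(\log M))$, which lets one write $\Td_{(\E,Q_\E)} = \exp\big(\str \log(\At_{(\E,Q_\E)}/(1 - e^{-\At_{(\E,Q_\E)}}))\big)$; since $\log(x/(1-e^{-x}))$ is a formal power series in $x$, the argument of the exponential is an infinite linear combination of the classes $\str(\At_{(\E,Q_\E)}^k)$, and invariance of the Todd class follows formally from the scalar case. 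The main obstacle is the bookkeeping behind the two cohomological identities above, namely isolating every $[Q_\E, h]$ correction produced when replacing $\psi \circ \phi$ by $\id_\E$ and verifying that each is a genuine coboundary, where the side conditions $h^2 = 0$, $\phi \circ h = 0$, $h \circ \psi = 0$ and the graded cyclicity of the supertrace are exactly what is needed.
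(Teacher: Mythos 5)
Your proof is correct, and it follows the route the paper intends: the paper offers no argument at all for this corollary, declaring it an ``immediate consequence'' of Proposition~\ref{prop: invariance of Atiyah class}. What your proposal does differently is make that word honest, and the content you supply is genuinely non-trivial. The paper's proof of Proposition~\ref{prop: invariance of Atiyah class} produces the endomorphism contraction $(\Phi,\Psi,H)$ with $\Phi(T)=\phi\circ T\circ\psi$ and shows it matches the two Atiyah cocycles, but passing from this to equality of $\str(\At^k)$ requires exactly your two lemmas, since $\Phi$ is neither an algebra map nor trace-preserving at the cochain level: (i) multiplicativity of $\Phi$ on cohomology, with defect $\phi\circ S\circ[Q_\E,h]\circ T\circ\psi=\pm\,[Q_\F,\phi\circ S\circ h\circ T\circ\psi]$ a coboundary for $[Q_\E,-]$-closed $S,T$; and (ii) trace-intertwining on cohomology, with defect $\str(T\circ[Q_\E,h])=\pm\,L_{Q_\M}\bigl(\str(T\circ h)\bigr)$ exact for closed $T$ — both hinging on $\psi\circ\phi=\id_\E+[Q_\E,h]$, graded cyclicity of $\str$, and the side conditions, precisely as you say. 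Your reduction of the Todd class to the scalar classes via $\Ber=\exp\circ\str\circ\log$ and $\log P(x)=-\sum_{k\geq1}\frac{B_k}{k}\frac{x^k}{k!}$ is also exactly the mechanism the paper itself deploys later, in the proof of Proposition~\ref{prop on Todd class}, so that step is sound and consistent with the paper. Finally, your observation that both $\ch_k(\E,Q_\E)$ and $\ch_k(\F,Q_\F)$ live in the same space $H^k(\Omega^k(\M),L_{Q_\M})$, which depends only on the base, so that the asserted ``isomorphism'' is in fact an equality of classes, is a correct sharpening of the statement; in short, your argument buys a rigorous proof where the paper offers only an assertion, at the cost of the bookkeeping you identify.
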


\section{Cohomology of regular Lie algebroids}
In this section, we study the cohomology of tensor fields on the dg manifold $(A[1], d_A)$ arising from a regular Lie algebroid $A$.
\subsection{The graded geometry of regular Lie algebroids}
\subsubsection{The graded geometry of Lie algebroids}
We start with a brief discussion on the graded geometry of general Lie algebroids.
According to Va\u{i}ntrob~\cite{Vai}, a Lie algebroid structure on a smooth vector bundle $A$ is one-to-one correspondent to a homological vector field $d_A$ on the graded manifold $A[1]$.
More precisely, given a Lie algebroid $A$ over $M$, we obtain a dg manifold $(A[1], d_A)$ whose space of functions is the Chevalley-Eilenberg dg algebra $(\Omega_A = \Gamma(\wedge A^\ast), d_A)$ of $A$.
Furthermore, each $A$-module $(E, \nabla^E)$ consisting of a ungraded vector bundle $E \to M$ and a flat Lie algebroid $A$-connection $\nabla^E$ gives rise to a dg vector bundle over the dg manifold $(A[1],d_A)$ via the following pullback diagram
\[
 \begin{tikzcd}
   (\E:= \pi^\ast E, d^{\nabla^E}) \ar[r] \ar[d] & E \ar[d] \\
    (A[1], d_A) \ar{r}{\pi} &   M.
 \end{tikzcd}
\]
However, the converse is not true in general, since any graded vector bundle over the graded manifold $A[1]$ does not necessarily arise from the pullback of an ungraded vector bundle over $M$ along $\pi$, but rather from the pullback of a graded vector bundle over $M$ (cf.~\cite{Mehta}).
Indeed, dg vector bundles over the dg manifold $(A[1],d_A)$ are one-to-one correspondent to representations up to homotopy (or $\infty$-representations) of $A$ consisting of graded vector bundles over $M$ equipped with flat $A$-superconnections~\cites{AC,GsM}.
\begin{definition}[\cites{AC,GsM}]
  A representation up to homotopy, or an $\infty$-representation, of $A$ on a graded vector bundle $E = \oplus_{i\in \mathbb{Z}} E^i$ over $M$ is a square zero operator $D$ of degree $1$ on
  \[
   \Omega_A(E) := \Omega_A \otimes_{C^\infty(M)} \Gamma(E)  = \Gamma(\wedge A^\ast \otimes E),
  \]
  satisfying the Leibniz rule
  \[
    D(\alpha\omega) = (d_A\alpha)\omega + (-1)^{\abs{\alpha}} \alpha(D\omega),
  \]
  for all homogeneous $\alpha \in \Omega_A$ and $\omega \in \Omega_A(E)$.
  The cohomology of the resulting complex is denoted by
  \[
  H^\bullet(A; E) = \bigoplus_k H^k(A; E) := \bigoplus_{p+q =k} H^k(\Omega_A^p(E^q), D).
  \]
\end{definition}
It is proved in~\cites{AC, GsM} that there exists a correspondence between the tangent dg vector bundle $(T_{A[1]}, L_{d_A})$ of $(A[1], d_A)$ and a representation up to homotopy of the Lie algebroid $A$ on the graded vector bundle $A[1] \oplus T_M$ over $ M$, called the adjoint representation in~\cite{AC}, which we now recall.

Observe that there is a short exact sequence of vector bundles over the graded manifold $A[1]$:
\begin{equation}\label{SES of tangent bundle on F[1]}
	0 \to \pi^\ast(A[1]) \xrightarrow{I}  T_{A[1]} \xrightarrow{\pi_\ast}  \pi^\ast(T_M) \to 0,
\end{equation}
where $\pi_\ast\colon T_{A[1]} \to \pi^\ast(T_M)$ is the tangent map of the bundle projection $\pi \colon A[1] \to M$, and $I$ is the canonical vertical lifting.
Taking global sections gives rise to a short exact sequence of left graded $\Omega_A$-modules:
\[
	0 \to  \Omega_A \otimes_{C^\infty(M)} \Gamma(A[1]) \xrightarrow{I}  \Gamma(T_{A[1]}) \xrightarrow{\pi_\ast}  \Omega_A \otimes_{C^\infty(M)} \Gamma(T_M) \to 0.
\]
The canonical vertical lifting $I$ is given by the $\Omega_A$-linear contraction defined by for all $\omega \in \Omega_A$ and $a[1] \in \Gamma(A[1])$,
\begin{equation}\label{Eq: Def of I}
I(\omega \otimes a[1]) = \omega \otimes \iota_a \in \Der(\Omega_A) \cong \Gamma(T_{A[1]}).
\end{equation}
Let us choose a linear connection $\nabla^A$ on the vector bundle $A$ over $M$.
This connection $\nabla^A$ induces a splitting of the short exact sequence~\eqref{SES of tangent bundle on F[1]} such that $T_{A[1]} \cong A[1] \times_{M} (A[1] \oplus T_M)$. Thus, one has an isomorphism of $\Omega_A$-modules
\begin{align}\label{Eqt:splitXFone}
 \Gamma(T_{A[1]}) &\stackrel{\cong}{\longrightarrow} \Omega_A (A[1] \oplus T_M) = \Omega_A \otimes_{C^\infty(M)} \Gamma(A[1] \oplus T_M).
\end{align}
The isomorphism~\eqref{Eqt:splitXFone} transfers the Lie derivative $L_{d_A}$ on $\Gamma(T_{A[1]})$ to a square zero derivation
\begin{equation}\label{Eq: Def of DnablaF}
 D_{\nabla^A} = \left(
                    \begin{array}{cc}
                      d_{\nabla^{\bas}} & R^{\bas}_\nabla \\
                      \rho_A & d_{\nabla^{\bas}} \\
                    \end{array}
                  \right) \colon
                  \left(
                    \begin{array}{c}
                      \Omega^\bullet_A(A[1]) \\
                      \Omega^{\bullet-1}_A(T_M) \\
                    \end{array}
                  \right) \rightarrow
                  \left(
                    \begin{array}{c}
                      \Omega^{\bullet+1}_A(A[1]) \\
                      \Omega^{\bullet}_A(T_M) \\
                    \end{array}
                    \right),
\end{equation}
where
\begin{itemize}
	\item $\rho_A$ is the $\Omega_A$-linear extension of the anchor map $\rho_A \colon A[1] \to T_M$;
	\item $d_{\nabla^{\bas}} \colon \Gamma(A[1]\oplus T_M) \to \Omega^1_A(A[1] \oplus T_M)$ is the covariant derivative of the basic $A$-connection $\nabla^{\bas}$ on $A[1] \oplus T_M$ defined by
   \[
     \nabla^{\bas}_a(u) := \rho_A(\nabla^A_u a) + [\rho_A(a), u],
   \]
   and
   \[
    \nabla^{\bas}_a(a^\prime[1]):= \left(\nabla^A_{\rho_A(a^\prime)} a + [a,a^\prime]_A\right) [1],
   \]
  for all $a, a^\prime \in \Gamma(A)$ and $u \in \Gamma(T_M)$;
\item ${R^{\bas}_{\nabla^A}} \in \Omega_A^2(\operatorname{Hom}(T_M, A[1]))$, called the basic curvature of $\nabla^A$, defines an $\Omega_A$-linear map $\Omega_A^{\bullet-1}(T_M) \to \Omega_{A}^{\bullet+1}(A[1])$ by
	 	\begin{align*}
 {R^{\bas}_{\nabla^A}}(u)(a^\prime, a^{\prime\prime}) &   :=\left(\nabla^A_{u}[a^\prime, a^{\prime\prime}]_A - [\nabla^A_u a^\prime, a^{\prime\prime}]_A - [a^\prime, \nabla^A_u  a^{\prime\prime}]_A - \nabla^A_{\nabla^{\bas}_{a^{\prime\prime}}u}a^\prime +\nabla^A_{\nabla^{\bas}_{a^\prime} u} a^{\prime\prime}\right)[1],
	 	\end{align*}
for all $a^\prime, a^{\prime\prime} \in \Gamma(A)$, and $u \in \Gamma(T_M)$.
	 \end{itemize}
In particular, we obtain an isomorphism of cochain complexes
\begin{equation}\label{Eq: TA as representation UTH}
  	\big(\Gamma(T_{A[1]}), L_{d_A}\big) \xrightarrow{\;\cong\;} \big(\Omega_A(A[1] \oplus T_M), D_{\nabla^A} = \rho_A + d_{\nabla^{\bas}} + R_{\nabla^A}^{\bas}\big).
\end{equation}
By taking dual operation, we obtain an isomorphism
\[
 \big(\Omega^1(A[1]) = \Gamma(T^\ast_{A[1]}), L_{d_A}\big) \xrightarrow{\;\cong\;} \big(\Omega_A(T^\ast M \oplus A^\ast[-1]),  D^\ast_{\nabla^A} = \rho_A^\ast + d^\ast_{\nabla^{\bas}} - (R^{\bas}_{\nabla^A})^\ast\big),
\]
where $\rho_A^\ast \colon \Omega_A(T^\ast M) \to \Omega_A(A^\ast[-1])$ and $(R^{\bas}_{\nabla^A})^\ast \in \Omega_A^2(\Hom(A^\ast[-1], T^\ast M))$ are the $\Omega_A$-linear dual of $\rho_A$ and $R^{\bas}_{\nabla^A}$, respectively, while $d^\ast_{\nabla^{\bas}}$ is the covariant derivative of the dual $A$-connection on $T^\ast M \oplus A^\ast[-1]$.

\subsubsection{Applications to regular Lie algebroids}
Let $(A, \rho_A, [-,-]_A)$ be a regular Lie algebroid over $M$ with the characteristic distribution $F:= \operatorname{Im}\rho_A \subseteq {T_M}$ of constant rank.
Let $K \subseteq A$ be the kernel of $\rho_A$, which is a bundle of Lie algebras over $M$ such that the inclusion $(\Gamma(K), [-,-]_K) \hookrightarrow (\Gamma(A), [-,-]_A)$ is a morphism of Lie algebras. We thus obtain a short exact sequence of Lie algebroids over $M$
\begin{equation}\label{Eq: SES of A}
  0 \to K \xrightarrow{i} A \xrightarrow{\rho_A} F \to 0,
\end{equation}
known as the Atiyah sequence of the regular Lie algebroid $A$ (\cite{Kub}).
According to~\cite{GsM}, the differential $D_{\nabla^A}$~\eqref{Eq: Def of DnablaF} can be simplified if we choose a special linear connection on the regular Lie algebroid $A$.
Note that there is another short exact sequence of vector bundles over $M$
\begin{equation}\label{SES}
0 \rightarrow F \rightarrow T_M \xrightarrow{\pr_B} B \rightarrow 0,
\end{equation}
where we denote by $B$ the quotient bundle $T_M/F$, which can be thought of as the normal bundle of the characteristic foliation $\mathcal{F} \subseteq M$.
We fix a quadruple
\begin{equation}\label{Eq: quadruple}
(\tau, j, \nabla^K, \nabla^F),
\end{equation}
where
\begin{itemize}
  \item $\tau \colon F \to A$ is a splitting of the short exact sequence~\eqref{Eq: SES of A} of vector bundles, and $j \colon B \to T_M$ is a splitting of the short exact sequence~\eqref{SES} of vector bundles;
  \item $\nabla^K$ is a linear connection on $K$ extending the $F$-connection on $K$ defined by
   \[
        \nabla^K_{u_F} \xi = [\tau(u_F), a_K]_A,
   \]
   for all $u_F \in \Gamma(F)$ and $a_K \in \Gamma(K)$;
   \item $\nabla^F$ a linear connection on $F$ extending a torsion-free $F$-connection on $F$, satisfying
   \[
    \nabla^F_{b} u_F = \pr_F[j(b), u_F],
   \]
   for all $b \in \Gamma(B), u_F \in \Gamma(F)$.
\end{itemize}
Let
 \begin{equation}\label{Eq: nablaA}
 \nabla^A_u a = \nabla^K_{u}(a_K) + \tau(\nabla^F_{u}a_F) + R^\tau(u_F, a_F),
\end{equation}
 for all $u \in \Gamma(T_M)$ and $a \in \Gamma(A)$, where $u_F$ is the component of $u$ in $\Gamma(F)$, $a_K$ and $a_F$ are the components of $a$ in $\Gamma(K)$ and $\Gamma(F)$, respectively, and $R^\tau \in \Omega_F^2(K)$, called the curvature of the splitting $\tau$, is defined by for all $u_F,v_F \in \Gamma(F)$,
\begin{align*}
   R^\tau(u_F, v_F) &= [\tau(u_F),\tau(v_F)]_A - \tau([u_F,v_F]).
\end{align*}
It is easy to see that $\nabla^A$ defined above is indeed a linear connection on $A$.
\begin{lemma}[\cite{GsM}]\label{lem: special connection on F}
The basic connection $\nabla^{\bas}$ and the basic curvature $R^{\bas}_{\nabla^A} \in \Omega^2_A (\Hom(T_M, A))$ of the linear connection $\nabla^A$ defined in~\eqref{Eq: nablaA} satisfies
\begin{align*}
  \nabla^{\bas}_a u &= \nabla^F_{\rho_A(a)} u_F + \nabla^{\Bott}_{\rho_A(a)} u_B = \nabla^F_{\rho_A(a)} u_F + \pr_B[\rho_A(a), j(u_B)], \\
  \nabla^{\bas}_a a^\prime &= [a, a_K^\prime]_A + \tau(\nabla^F_{\rho_A(a)}\rho_A(a^\prime)), \\
 {R^{\bas}_{\nabla^A}}(a,a^\prime)(u_F) &= -\tau (R^{\nabla^F}(\rho_A(a), \rho_A(a^\prime))u_F),
 \end{align*}
 and
\begin{align*}
R^{\bas}_{\nabla^A}(a,a^\prime)&(j(b)) = \nabla^K_{j(b)}[a_K, a_K^\prime]_A - [\nabla^K_{j(b)} a_K, a_K^\prime]_A - [a_K, \nabla^K_{j(b)} a_K^\prime]_A \\
&- R^{\nabla^K}(\rho_A(a), j(b))a_K^\prime + R^{\nabla^K}(\rho_A(a^\prime), j(b))a_K  \\
&+ \nabla^K_{j(b)} R^\tau(\rho_A(a), \rho_A(a^\prime)) - R^\tau(\pr_F[j(b), \rho_A(a)], \rho_A(a^\prime)) - R^\tau(\rho_A(a), \pr_F[j(b), \rho_A(a^\prime)]),
\end{align*}
for all $a, a^\prime, a^{\prime\prime} \in \Gamma(A), u \in \Gamma(T_M)$ and $b \in \Gamma(B)$, where $u_F$ and $u_B$ are the components of $u$ in $\Gamma(F)$ and $\Gamma(B)$, respectively, $a_K = \pr_K(a)$ is the component of $a$ in $\Gamma(K)$, $R^{\nabla^F}$ and $R^{\nabla^K}$ are the curvatures of the chosen linear connections $\nabla^F$ and $\nabla^K$, respectively.
 \end{lemma}
As a consequence, the basic curvature defines an $\Omega_A$-linear map of degree $1$
\[
\Omega \colon  \Omega^\bullet_A(B) \to \Omega_A^{\bullet+2}(K[1])
\]
by
\begin{equation}\label{Eq: Def of Omega}
\Omega(b)(a,a^\prime) := -R^\bas_{\nabla^A}(a,a^\prime)(j(b))[1],
\end{equation}
for all $a, a^\prime \in \Gamma(A)$ and $b \in \Gamma(B)$.

\subsection{Contractions of tangent dg vector bundles}
Given a regular Lie algebroid $(A,\rho_A,[-,-]_A)$, there exist ordinary representations of $A$ on the vector bundles $K$ and $B$, that is, flat Lie algebroid $A$-connections on $K$ and $B$ defined by
\begin{align}\label{Eq: A-module on K}
  \widetilde{\nabla}^K &\colon \Gamma(A) \times \Gamma(K) \to \Gamma(K),  &
  \widetilde{\nabla}^K_a a_K &= [a, a_K]_A,
\end{align}
and
\begin{align}\label{Eq: A-module on B}
    \widetilde{\nabla}^B &\colon \Gamma(A) \times \Gamma(B) \to \Gamma(B), &
  \widetilde{\nabla}^B_a b &= \pr_B([\rho_A(a), j(b)]),
\end{align}
respectively, for all $a \in \Gamma(A), a_K \in \Gamma(K), b \in \Gamma(B)$. Here $j \colon B \to T_M$ is any splitting of the short exact sequence~\eqref{SES} of vector bundles.

According to Example 4.17 in Abad and Crainic~\cite{AC}, the adjoint representation of the regular Lie algebroid $A$ on the 2-term complex $A[1] \xrightarrow{\rho_A} T_M$ is quasi-isomorphic to a representation up to homotopy on its cohomology $K[1] \to B$. This representation up to homotopy on $K[1] \oplus B$ is indeed a perturbation of the ordinary representation of $A$ defined in~\eqref{Eq: A-module on K} and~\eqref{Eq: A-module on B}.
However, the explicit construction was skipped.  For completeness, we give a thorough description in the following proposition.
\begin{proposition}[\cite{AC}]\label{prop: contraction on TA}
Let $A$ be a regular Lie algebroid over $M$. For any quadruple $(\tau, j, \nabla^K, \nabla^F)$ as in~\eqref{Eq: quadruple}, there is a contraction for the adjoint representation of $A$ on the graded vector bundle $A[1] \oplus T_M$
\begin{equation}\label{Eq: contraction for RUTH}
\begin{tikzcd}
	(\Omega_A(A[1] \oplus T_M), D_{\nabla^A}) \arrow[loop left, distance=2em, start anchor={[yshift=-1ex]west}, end anchor={[yshift=1ex]west}]{}{h} \arrow[r,yshift = 0.7ex, "\phi"] & (\Omega_A(K[1] \oplus B), d_{\CE} - \Omega) \arrow[l,yshift = -0.7ex, "\psi"],
\end{tikzcd}
\end{equation}
where $\nabla^A$ is the linear connection on $A$ defined in~\eqref{Eq: nablaA},  $D_{\nabla^A}$ is the differential defined in~\eqref{Eq: Def of DnablaF}, $d_{\CE}$ is the Chevalley-Eilenberg differential of the $A$-module $K[1] \oplus B$ defined in~\eqref{Eq: A-module on K} and~\eqref{Eq: A-module on B}, and $\Omega \colon \Omega^\bullet_A(B) \to \Omega_A^{\bullet+2}(K[1])$ is defined in~\eqref{Eq: Def of Omega}.
\end{proposition}
To prove this proposition, we need the following perturbation lemma (see~\cite{Manetti}).
\begin{lemma}[Perturbation lemma]\label{Lem: OPT}
Given a contraction of cochain complexes
\[
\begin{tikzcd}
(P, \delta) \arrow[loop left, distance=2em, start anchor={[yshift=-1ex]west}, end anchor={[yshift=1ex]west}]{}{h } \arrow[r,yshift = 0.7ex, "\phi"] & (T , d) \arrow[l,yshift = -0.7ex, "\psi"],
\end{tikzcd}
\]
and a perturbation $\varrho$ of the differential $\delta$, i.e.,  a linear map $\varrho\colon P \to P[1]$ satisfying $\delta+\varrho$ is a new differential on $P$ and the following constraints
 \begin{align}\label{Eq: perturb constraints}
    \cup_n \ker((h\varrho)^n\psi) &= T, & \cup_n \ker(\phi(\varrho h)^n) &= P, & \cup_n \ker(h(\varrho h)^n) &= P,
 \end{align}
 the series
\begin{align}
	\vartheta &:=\sum_{k=0}^\infty \phi(h\varrho)^k\varrho\psi, & 	\phi_\flat &:=\sum_{k=0}^\infty \phi(\varrho h)^k, \label{Eq: Def of thetaandphi}\\
	\psi_\flat &:=\sum_{k=0}^\infty (h\varrho)^k \psi, & h_\flat &:=\sum_{k=0}^\infty h (\varrho h)^k \label{Eq: Def of psiandh}
\end{align}
all converge, and the datum
\[
	\begin{tikzcd}
	(P, \delta  +\varrho) \arrow[loop left, distance=2em, start anchor={[yshift=-1ex]west}, end anchor={[yshift=1ex]west}]{}{h_\flat } \arrow[r,yshift = 0.7ex, "\phi_\flat"] & (T ,  d+\vartheta) \arrow[l,yshift = -0.7ex, "\psi_\flat"]
	\end{tikzcd}
\]
constitutes a new contraction.
\end{lemma}

\begin{proof}[Proof of Proposition~\ref{prop: contraction on TA}]
Given a splitting $j \colon B \to T_M$ and $\tau \colon F \to A$ of the short exact sequences~\eqref{Eq: SES of A} and~\eqref{SES} of vector bundles, respectively, there is a contraction of cochain complexes of $C^\infty(M)$-modules
  \[
   	\begin{tikzcd}
	(\Gamma(A[1] \oplus T_M), \rho_A) \arrow[loop left, distance=2em, start anchor={[yshift=-1ex]west}, end anchor={[yshift=1ex]west}]{}{h} \arrow[r,yshift = 0.7ex, "\phi"] & (\Gamma(K[1] \oplus B), 0), \arrow[l,yshift = -0.7ex, "\psi"]
	\end{tikzcd}
  \]
where the three maps $(\phi, \psi, h)$ are defined  by
\begin{align*}
    \phi(a[1] + u) &= \pr_K(a)[1] + \pr_B(u), & \psi(a_K[1] + b) &= a_K[1] + j(b), & h(a[1] + u) &= -\tau(\pr_F(u))[1],
  \end{align*}
for all $a \in \Gamma(A), a_K \in \Gamma(K)$ and $u \in \Gamma(T_M)$.
The $\Omega_A$-linear extension of this contraction gives rise to a contraction of $\Omega_A$-modules
\begin{equation}\label{Eq: contraction of delta}
\begin{tikzcd}
	(\Omega_A(A[1] \oplus T_M), \rho_A) \arrow[loop left, distance=2em, start anchor={[yshift=-1ex]west}, end anchor={[yshift=1ex]west}]{}{h} \arrow[r,yshift = 0.7ex, "\phi"] & (\Omega_A(K[1] \oplus B) , 0) \arrow[l,yshift = -0.7ex, "\psi"].
\end{tikzcd}
\end{equation}
Observe that $d_{\nabla^{\bas}} + R^{\bas}_{\nabla^A}$ defines a perturbation of the differential $\rho_A$.
Since
\begin{align*}
 h((d_{\nabla^{\bas}}+ R^{\bas}_{\nabla^A})(a[1]+u)) &= h(d_{\nabla^{\bas}}(u)) \in \Omega_A^1(A[1])
\end{align*}
is defined by
\begin{align*}
  \iota_ah(d_{\nabla^{\bas}}(u)) &= h(\nabla^{\bas}_a u) = h(\nabla^F_{\rho_A(a)}u_F + \pr_B[\rho_A(a), j(u_B)]) \\
  &= h(\nabla^F_{\rho_A(a)}u_F) = -\tau(\nabla^F_{\rho_A(a)}u_F)[1],
\end{align*}
and
\begin{align*}
  (d_{\nabla^{\bas}} + R^{\bas}_{\nabla^A})(h(a[1]+u)) &= -d_{\nabla^{\bas}}(\tau(u_F)) \in \Omega_A^1(A[1])
\end{align*}
is defined by
\begin{align*}
  \iota_a(-d_{\nabla^{\bas}}(\tau(u_F))) &= -\nabla^{\bas}_a(\tau(u_F)) = -\tau(\nabla^F_{\rho_A(a)} u_F)[1],
\end{align*}
it follows that
\[
   h \circ (d_{\nabla^{\bas}} + R^{\bas}_{\nabla^A}) = - (d_{\nabla^{\bas}} + R^{\bas}_{\nabla^A}) \circ h \colon \Omega_A(A[1] \oplus T_M) \to \Omega_A(A[1] \oplus T_M).
\]
Combining with the side conditions $\phi \circ h = 0, h \circ \psi = 0$ and $h^2 = 0$, we have
\begin{align*}
 \phi \circ  (d_{\nabla^{\bas}} + R_{\nabla^A}^{\bas}) \circ h &= - \phi \circ h \circ (d_{\nabla^{\bas}} + R_{\nabla^A}^{\bas}) = 0, \\
 h \circ (d_{\nabla^{\bas}} + R_{\nabla^A}^{\bas}) \circ \psi &= - (d_{\nabla^{\bas}} + R_{\nabla^A}^{\bas}) \circ h \circ \psi = 0, \\
 h \circ (d_{\nabla^{\bas}} + R_{\nabla^A}^{\bas}) \circ h &= h^2 \circ  (d_{\nabla^{\bas}} +  R_{\nabla^A}^{\bas}) = 0.
\end{align*}
Thus, the maps $\phi, \psi, h$, and the perturbation $d_{\nabla^{\bas}} + R_{\nabla^A}^{\bas}$ satisfy constraints in Equation~\eqref{Eq: perturb constraints}.
Applying the perturbation Lemma~\ref{Lem: OPT} to Contraction~\eqref{Eq: contraction of delta} and the perturbation $d_{\nabla^{\bas}} + R_{\nabla^A}^{\bas}$, we obtain a new contraction
\[
\begin{tikzcd}
	(\Omega_A(A[1] \oplus T_M), D_{\nabla^A} = \rho_A + d_{\nabla^{\bas}} + R_{\nabla^A}^{\bas}) \arrow[loop left, distance=2em, start anchor={[yshift=-1ex]west}, end anchor={[yshift=1ex]west}]{}{h^\prime} \arrow[r,yshift = 0.7ex, "\phi^\prime"] & (\Omega_A(K[1] \oplus B) , D^\prime) \arrow[l,yshift = -0.7ex, "\psi^\prime"],
\end{tikzcd}
\]
where
\begin{align*}
  \phi^\prime &= \sum_{l =0}^{\infty}\phi ((d_{\nabla^{\bas}} + R^{\bas}_{\nabla^A}) h)^l = \phi + \sum_{l=1}^\infty \big(\phi (d_{\nabla^{\bas}}  + R^{\bas}_{\nabla^A}) h\big) \circ \big((d_{\nabla^{\bas}} + R^{\bas}_{\nabla^A}) h\big)^{l-1} = \phi, \\
  \psi^\prime &= \sum_{l=0}^{\infty}(h (d_{\nabla^{\bas}} + R^{\bas}_{\nabla^A}))^l \psi = \psi + \sum_{l=1}^{\infty} (h (d_{\nabla^{\bas}} + R^{\bas}_{\nabla^A}))^{l-1}  \circ \big(h  (d_{\nabla^{\bas}} + R^{\bas}_{\nabla^A})  \psi\big) = \psi, \\
  h^\prime &= \sum_{l=0}^{\infty} (h (d_{\nabla^{\bas}} + R^{\bas}_{\nabla^A}))^l  h = h + \sum_{l=1}^{\infty} (h (d_{\nabla^{\bas}} + R^{\bas}_{\nabla^A}))^{l-1} \circ \big(h  (d_{\nabla^{\bas}} + R^{\bas}_{\nabla^A})  h\big) = h,
  \end{align*}
  and the new differential $D^\prime$ on $\Omega_A(K[1] \oplus B)$ is
  \begin{align*}
  D^\prime &= \phi^\prime \circ (d_{\nabla^{\bas}} + R^{\bas}_{\nabla^A}) \circ \psi = d_{\CE} - \Omega,
\end{align*}
according to Lemma~\ref{lem: special connection on F}.
\end{proof}

Combining Equation~\eqref{Eq: TA as representation UTH} with Proposition~\ref{prop: contraction on TA}, we obtain a contraction for the tangent dg vector bundle of the dg manifold $(A[1], d_A)$.
\begin{corollary}\label{corollary on TA}
  For each choice of quadruple $(j,\tau,\nabla^K, \nabla^F)$, there exists a contraction of dg vector bundles over the dg manifold $(A[1], d_A)$
  \[
    \begin{tikzcd}
	(T_{A[1]}, L_{d_A}) \arrow[loop left, distance=2em, start anchor={[yshift=-1ex]west}, end anchor={[yshift=1ex]west}]{}{H} \arrow[r,yshift = 0.7ex, "\Phi"] & (\E, Q_\E), \arrow[l,yshift = -0.7ex, "\Psi"]
	\end{tikzcd}
  \]
  where $\E = \pi^\ast(E)$ is the pullback bundle of the Whitney sum $E:= K[1] \oplus B$ over $M$ along the projection $\pi \colon A[1] \to M$, $Q_\E = d_{\CE} - \Omega$ is the differential on $\Gamma(\E)$ induced by the pair of homological vector fields carried by the vector bundle $\E$ and its base manifold $A[1]$,
  and the inclusion $\Psi$ is defined by
  \begin{align}\label{Eq: Def of Psi}
    \Psi(\alpha \otimes a_K[1]) &= I(\psi(\alpha \otimes a_K[1])) = \alpha \otimes \iota_{a_K}, & \Psi(\alpha \otimes b) &= \alpha \otimes \nabla^A_{j(b)},
  \end{align}
  for all $\alpha \in \Omega_A, a_K \in \Gamma(K)$ and $b \in \Gamma(B)$.
  Here $\nabla^A$ is the linear connection on $A$ defined in~\eqref{Eq: nablaA} and $I \colon \Omega_A(A[1]) \to \Gamma(T_{A[1]})$ is the canonical vertical lifting in~\eqref{Eq: Def of I}.
 \end{corollary}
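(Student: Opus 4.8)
The plan is to conjugate the contraction of Proposition~\ref{prop: contraction on TA} by the dg vector bundle isomorphism furnished by the connection $\nabla^A$, and then to read off the stated formula for $\Psi$. First, I would record that the connection $\nabla^A$ of~\eqref{Eq: nablaA} splits the short exact sequence~\eqref{SES of tangent bundle on F[1]} and hence yields a graded vector bundle isomorphism $\mathcal{J}\colon T_{A[1]} \xrightarrow{\cong} \pi^\ast(A[1] \oplus T_M)$ over $A[1]$, whose effect on sections is the isomorphism~\eqref{Eqt:splitXFone}. By~\eqref{Eq: TA as representation UTH} this map carries $L_{d_A}$ to $D_{\nabla^A}$, and it is $\Omega_A$-linear by construction, so $\mathcal{J}$ is an isomorphism of dg vector bundles over $(A[1], d_A)$. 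Its inverse restricts to the vertical lifting $I$ of~\eqref{Eq: Def of I} on the summand $\Omega_A(A[1])$ and to the horizontal lifting determined by $\nabla^A$ on the summand $\Omega_A(T_M)$; I write $\nabla^A_u \in \Gamma(T_{A[1]})$ for the horizontal lift of $u \in \Gamma(T_M)$.

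Next, taking the contraction $(\phi, \psi, h)$ from $(\Omega_A(A[1] \oplus T_M), D_{\nabla^A})$ to $(\Omega_A(K[1] \oplus B), d_{\CE} - \Omega) = (\Gamma(\E), Q_\E)$ supplied by Proposition~\ref{prop: contraction on TA}, I would set
\[
\Phi := \phi \circ \mathcal{J}, \qquad \Psi := \mathcal{J}^{-1} \circ \psi, \qquad H := \mathcal{J}^{-1} \circ h \circ \mathcal{J},
\]
and check that the defining relations of a contraction are transported through the isomorphism. Indeed $\Phi \circ \Psi = \phi \circ \psi = \id_\E$, while $\mathcal{J}^{-1} D_{\nabla^A} \mathcal{J} = L_{d_A}$ gives $\Psi \circ \Phi = \mathcal{J}^{-1}(\id + [D_{\nabla^A}, h]) \mathcal{J} = \id + [L_{d_A}, H]$, and the side conditions $H^2 = 0$, $\Phi \circ H = 0$, $H \circ \Psi = 0$ follow at once from $h^2 = 0$, $\phi \circ h = 0$, $h \circ \psi = 0$. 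Since $\mathcal{J}$ intertwines $L_{d_A}$ with $D_{\nabla^A}$, both $\Phi$ and $\Psi$ remain cochain maps, so this is the required contraction from $(T_{A[1]}, L_{d_A})$ to $(\E, Q_\E)$.

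Finally, I would verify the explicit description of $\Psi$. On the summand $\Omega_A(K[1])$ the map $\psi$ acts by the inclusion $K \hookrightarrow A$, so $\psi(\alpha \otimes a_K[1]) = \alpha \otimes a_K[1]$ lands in $\Omega_A(A[1])$ and applying $\mathcal{J}^{-1}$, which is $I$ there, gives $\Psi(\alpha \otimes a_K[1]) = I(\alpha \otimes a_K[1]) = \alpha \otimes \iota_{a_K}$; on the summand $\Omega_A(B)$ one has $\psi(\alpha \otimes b) = \alpha \otimes j(b) \in \Omega_A(T_M)$, and applying $\mathcal{J}^{-1}$ via the horizontal lift produces $\Psi(\alpha \otimes b) = \alpha \otimes \nabla^A_{j(b)}$, as asserted. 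The only genuinely non-formal point---and hence the step I expect to be the main obstacle---is the passage from the $\Omega_A$-linear maps of Proposition~\ref{prop: contraction on TA} to honest morphisms of dg vector bundles over $(A[1], d_A)$; this rests on the equivalence between representations up to homotopy of $A$ and dg vector bundles over $(A[1], d_A)$ recalled earlier, after which every remaining identity above is a formal consequence of $\mathcal{J}$ being an isomorphism.
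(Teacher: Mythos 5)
Your proposal is correct and follows essentially the same route as the paper, which obtains the corollary precisely by combining the isomorphism~\eqref{Eq: TA as representation UTH} induced by $\nabla^A$ with the contraction of Proposition~\ref{prop: contraction on TA}; your conjugation by $\mathcal{J}$ and the identification of $\mathcal{J}^{-1}$ with the vertical lift $I$ on $\Omega_A(A[1])$ and the horizontal lift on $\Omega_A(T_M)$ just make explicit what the paper leaves implicit in the phrase ``combining''. The formal transport of the contraction relations and the resulting formula for $\Psi$ are exactly as in the paper.
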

\begin{remark}
 When the anchor $\rho_A$ is injective, i.e., $A$ is identified with its characteristic distribution $F = \operatorname{Im}\rho_A$, the above contraction was explicitly constructed in~\cite{CXX} to compute Atiyah and Todd classes of integrable distributions.
\end{remark}

As a consequence, the map $\Psi$ induces an isomorphism from the cohomology of the representation up to homotopy $(\Omega_A(K[1]\oplus B), Q_\E)$ of $A$ to the cohomology of the tangent dg vector bundle $(T_{A[1]}, L_{d_A})$, i.e.,
 \[
   \Psi \colon H^\bullet(A; K[1] \oplus B) := H^\bullet(\Omega_A(K[1] \oplus B), Q_\E) \xrightarrow{\cong} H^\bullet(\Gamma(T_{A[1]}), L_{d_A}).
 \]
By taking dual operation, we obtain a contraction for the cotangent dg vector bundle of the dg manifold $(A[1],d_A)$.
\begin{corollary}\label{corollary on TastA}
  For each choice of quadruple $(j,\tau,\nabla^K, \nabla^F)$, there exists a contraction for the cotangent dg vector bundle of the dg manifold $(A[1],d_A)$
  \[
  \begin{tikzcd}
	(T^\ast_{A[1]}, L_{d_A}) \arrow[loop left, distance=2em, start anchor={[yshift=-1ex]west}, end anchor={[yshift=1ex]west}]{}{H^\ast} \arrow[r,yshift = 0.7ex, "\Psi^\ast"] & (\E^\ast  = \pi^\ast(B^\ast \oplus K^\ast[-1]), Q_{\E^\ast}:= d_{\CE} + \Omega^\ast),  \arrow[l,yshift = -0.7ex, "\Phi^\ast"]
	\end{tikzcd}
  \]
  where $d_{\CE}$ is the Chevalley-Eilenberg differential of the dual $A$-module $B^\ast \oplus K^\ast[-1]$, and $\Omega^\ast$ is the $\Omega_A$-linear dual of $\Omega$.
\end{corollary}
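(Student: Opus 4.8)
The plan is to obtain this contraction by applying the $\Omega_A$-linear dualization functor to the contraction of Corollary~\ref{corollary on TA}, exactly as in the passage already used at the beginning of the proof of Proposition~\ref{prop: invariance of Atiyah class}: the transpose of a contraction of bounded dg vector bundles is again a contraction. Both $A[1] \oplus T_M$ and $K[1] \oplus B$ are bounded, so their pullbacks are bounded graded vector bundles and the dualization is well behaved. Since the transpose is contravariant, it reverses the two horizontal arrows: the transpose $\Psi^\ast \colon T^\ast_{A[1]} \to \E^\ast$ of the inclusion $\Psi$ becomes the projection, the transpose $\Phi^\ast \colon \E^\ast \to T^\ast_{A[1]}$ of the projection $\Phi$ becomes the inclusion, and $H^\ast$ becomes the homotopy on $T^\ast_{A[1]}$.

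First I would transpose the structural identities of Corollary~\ref{corollary on TA}. The relation $\Phi \circ \Psi = \id_\E$ dualizes to $\Psi^\ast \circ \Phi^\ast = \id_{\E^\ast}$, the homotopy relation $\Psi \circ \Phi = \id_{T_{A[1]}} + [L_{d_A}, H]$ dualizes to $\Phi^\ast \circ \Psi^\ast = \id_{T^\ast_{A[1]}} + [L_{d_A}, H^\ast]$, and the side conditions $H^2 = 0$, $\Phi \circ H = 0$, $H \circ \Psi = 0$ transpose to $(H^\ast)^2 = 0$, $H^\ast \circ \Phi^\ast = 0$, $\Psi^\ast \circ H^\ast = 0$. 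Because $\Phi$ and $\Psi$ intertwine $L_{d_A}$ with $Q_\E$, their transposes intertwine $L_{d_A}$ on $T^\ast_{A[1]}$ with the transpose differential $Q_{\E^\ast}$ on $\E^\ast$; hence $(\Phi^\ast, \Psi^\ast, H^\ast)$ already satisfies every axiom of a contraction, and it only remains to name $Q_{\E^\ast}$.

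To identify $Q_{\E^\ast}$ I would use the dual identification $(T^\ast_{A[1]}, L_{d_A}) \cong (\Omega_A(T^\ast M \oplus A^\ast[-1]), D^\ast_{\nabla^A})$ recorded just before the regular case and transpose $Q_\E = d_{\CE} - \Omega$ term by term. The transpose of the Chevalley-Eilenberg differential of the $A$-module $E = K[1] \oplus B$ is, by definition of the dual representation, the Chevalley-Eilenberg differential $d_{\CE}$ of the dual $A$-module $E^\ast = B^\ast \oplus K^\ast[-1]$, while the transpose of $-\Omega \in \Omega^2_A(\Hom(B, K[1]))$ is $+\Omega^\ast \in \Omega^2_A(\Hom(K^\ast[-1], B^\ast))$; together these give $Q_{\E^\ast} = d_{\CE} + \Omega^\ast$.

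The main obstacle is the sign bookkeeping in this final identification. The flip from $-\Omega$ to $+\Omega^\ast$, and the invariance of the $d_{\CE}$-part under passage to the dual module, must both be checked against a single fixed Koszul-sign convention for the $\Omega_A$-bilinear pairing between $\E$ and $\E^\ast$---the very same care that underlies the earlier passage from $D_{\nabla^A}$ to $D^\ast_{\nabla^A}$. Once that convention is pinned down, all of the contraction identities follow formally from functoriality of the transpose, so the corollary requires no new geometric input beyond Corollary~\ref{corollary on TA}.
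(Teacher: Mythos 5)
Your proposal is correct and is essentially the paper's own argument: the paper obtains Corollary~\ref{corollary on TastA} precisely by ``taking the dual operation'' on the contraction of Corollary~\ref{corollary on TA}, exactly as you do, with your sign flip from $-\Omega$ to $+\Omega^\ast$ matching the paper's earlier dualization formula $D^\ast_{\nabla^A} = \rho_A^\ast + d^\ast_{\nabla^{\bas}} - (R^{\bas}_{\nabla^A})^\ast$. Your explicit treatment of the contravariance of the transpose and of the Koszul-sign conventions only spells out what the paper leaves implicit.
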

Note that tensor products of the two contractions in Corollary~\ref{corollary on TA} and Corollary~\ref{corollary on TastA} produce contractions from the space $\Gamma(T^{\ast \otimes m}_{A[1]} \otimes T_{A[1]}^{\otimes n})$ of $(m,n)$ tensor fields on the dg manifold $(A[1], d_A)$ to the section space $\Gamma(\E^{\ast \otimes m} \otimes \E^{\otimes n})$ of the dg vector bundle $(\E, Q_\E)$.
For simplicity,  we will also denote by $Q_\E$ the induced differential on the section space $\Gamma(\E^{\ast \otimes m} \otimes \E^{\otimes n})$ by abuse of notation.
When passing to cohomology, for all pairs $(m,n)$ of non-negative integers, we obtain isomorphisms
 \begin{equation}\label{Eq: isom for mn tensor fields}
    \Phi_{m,n} \colon H^\bullet(\Gamma(T^{\ast \otimes m}_{A[1]} \otimes T_{A[1]}^{\otimes n}), L_{d_A}) \xrightarrow{\quad \cong \quad} H^\bullet(\Gamma(\E^{\ast \otimes m} \otimes \E^{\otimes n}), Q_\E) = H^\bullet(A; E^{\ast \otimes m} \otimes E^{\otimes n}).
 \end{equation}
Here $H^\bullet(A; E^{\ast \otimes m} \otimes E^{\otimes n})$ is the cohomology of the representation up to homotopy of $A$ on the tensor products $E^{\ast \otimes m} \otimes E^{\otimes n}$ of graded vector bundles over $M$.

\section{Atiyah classes of regular Lie algebroids}
In this section, we study the Atiyah class of a regular Lie algebroid $A$ over $M$, that is, the Atiyah class of the tangent dg vector bundle $(T_{A[1]}, L_{d_A})$ of the dg manifold $(A[1], d_A)$ arising from $A$.
\subsection{Atiyah classes}
Let us fix a quadruple $(\tau, j,\nabla^K,\nabla^F)$ as in~\eqref{Eq: quadruple} and denote by $\nabla^A$ the associated linear connection on $A$ defined in~\eqref{Eq: nablaA}.
By Proposition~\ref{prop: invariance of Atiyah class} and Corollary~\ref{corollary on TA}, the Atiyah class of the tangent dg vector bundle $(T_{A[1]}, L_{d_A})$ is related via an isomorphism to the Atiyah class of the dg vector bundle
\[
(\E:= \pi^\ast(E) = \pi^\ast(K[1] \oplus B), Q_\E:= d_{\CE} - \Omega).
\]
We summarize these into our main theorem.
\begin{theorem}\label{Theorem on Atiyah class of A[1]}
  For each choice of quadruple $(j,\tau,\nabla^K, \nabla^F)$, the Atiyah class $\At_{(A[1],d_A)}$ of the dg manifold $(A[1],d_A)$ arising from a regular Lie algebroid $A$ is related to the Atiyah class $\At_{(\E, Q_\E)}$ of the dg vector bundle $(\E, Q_\E)$ via the isomorphism
  \[
 \id_{T_{A[1]}^\ast} \otimes \Phi_{1,1} \colon H^1\left(\Gamma(T^\ast_{A[1]} \otimes \End(T_{A[1]}))\right) \xrightarrow{\cong} H^1\left(\Gamma(T_{A[1]}^\ast \otimes \End(\E))\right).
  \]
  Here $\Phi_{1,1}$ is the isomorphism for the integer pair $(1,1)$ in~\eqref{Eq: isom for mn tensor fields}.
  Moreover, the class
  \[
\begin{tikzcd}
  \left(\Phi_{1,0} \otimes \id_{\End(\E)}\right)\left(\At_{(\E,Q_\E)}\right) \in  H^1(\Gamma(\E^\ast \otimes \End(\E))) = H^1(A; E^\ast \otimes \End(E))
  \end{tikzcd}
   \]
   is represented by a (formal) sum $\alpha_A + \beta_A + \alpha_B + \beta_B \in \Omega_A(E^\ast \otimes \End(E))$ of cocycles,  where
 \begin{enumerate}
   \item $\alpha_A \in \Gamma(K^\ast[-1] \otimes \End(K[1]))$ is given by the Lie bracket $[-,-]_K$ on $\Gamma(K)$;
   \item $\beta_A \in \Omega^1_A(B^\ast \otimes \End(K[1]))$ is defined by
       \begin{align*}
        \beta_A(b, a_K[1])(a_0) &= \Omega(b)(a_0,a_K) =  \left(R^{\nabla^K}(\rho_A(a_0), j(b))a_K\right)[1] \\
        &\quad + \left(\nabla^K_{j(b)}[a_K, \pr_K(a_0)]_K - [\nabla_{j(b)}^K a_K, \pr_K(a_0)]_K - [a_K, \nabla^K_{j(b)}\pr_K(a_0)]_K\right)[1],
       \end{align*}
       for all $a_0 \in \Gamma(A)$ and $b \in \Gamma(B)$; Here $R^{\nabla^K}$ is the curvature of the linear connection $\nabla^K$ on $K$;
   \item $\alpha_B \in \Omega^1_A(B^\ast \otimes \End(B))$ is given by the $(1,1)$-component of the curvature $R^{\nabla^B}_{1,1}$ of a linear connection $\nabla^B$ on $B$ extending the Bott $F$-connection, i.e.,
  \begin{align*}
    \alpha_B(b, b^\prime)(a) &= R^{\nabla^B}(\rho_A(a), b)b^\prime = [\nabla^B_{\rho_A(a)}, \nabla^B_{j(b)}] b^\prime - \nabla^B_{[\rho_A(a),j(b)]}b^\prime,
  \end{align*}
   for all $a \in \Gamma(A)$ and $b, b^\prime \in \Gamma(B)$;
  \item  $\beta_B \in  \Omega_A^2(B^\ast \otimes B^\ast \otimes K[1])$ is defined by
  \begin{align*}
    \beta_B&(b_1, b_2)(a_0,a_1) = \nabla_{j(b_1)}(\Omega)(b_2; a_0, a_1) \\
    &:= \nabla^K_{j(b_1)} \Omega(b_2)(a_0, a_1) - \Omega(\nabla^B_{j(b_1)}b_2)(a_0,a_1) - \Omega(b_2)(a_0, \nabla^A_{j(b_1)}a_1) - \Omega(b_2)(\nabla^A_{j(b_1)}a_0, a_1),
  \end{align*}
     for all $a_0,a_1 \in \Gamma(A)$, $b_1,b_2 \in \Gamma(B)$.
\end{enumerate}
These four components satisfy
\begin{align*}
  d_{\CE}(\alpha_A) &= 0, & d_{\CE}(\alpha_B) &= 0, & \Omega(\beta_A) & =0, \\
  \Omega(\beta_B) &= 0, & \Omega(\alpha_A) &= d_{\CE}(\beta_A), &  \Omega(\alpha_B) &= d_{\CE}(\beta_B).
\end{align*}
\end{theorem}
To compute this Atiyah class, we need a $T_{A[1]}$-connection on $\E$.
For this purpose, we first choose a linear connection $\nabla^{K[1] \oplus B}$ on the Whitney sum $K[1] \oplus B$ defined by
\[
 \nabla^{K[1] \oplus B}_u (a_K[1] + b):= (\nabla^K_u a_K)[1] + \nabla^B_u b,
\]
for all $u \in \Gamma(T_M), a_K \in \Gamma(K)$ and $b \in \Gamma(B)$, where $\nabla^K$ is the chosen linear connection on $K$, and $\nabla^B$ is a linear connection $B$ extending the Bott $F$-connection.
We denote by $\nabla^\E$ the pullback connection of $\nabla^{K[1] \oplus B}$ on $\E$ along the projection $\pi \colon A[1] \to M$, that is,
\begin{equation}\label{Eq: pullback connection}
  \nabla^\E_{\mathcal{X}} (\alpha \otimes (a_K[1]+b)) = \mathcal{X}(\alpha) \otimes (a_K[1] + b) + (-1)^{\abs{\mathcal{X}}\abs{\alpha}}\alpha \otimes \nabla^{K[1] \oplus B}_{\pi_\ast(\mathcal{X})} (a_{K}[1] + b),
\end{equation}
for all homogeneous $\mathcal{X} \in \Gamma(T_{A[1]}), \alpha \in \Omega_A$ and all $a_K \in \Gamma(K), b \in \Gamma(B)$.
\begin{lemma}\label{Lem: Atiyah cocycle}
  Under the quasi-isomorphism
  \[
   \Psi \colon (\Omega_A(K[1] \oplus B), Q_\E) \to (\Gamma(T_{A[1]}), L_{d_A})
  \]
  defined in Corollary~\ref{corollary on TA}, the Atiyah cocycle $\At^{\nabla^\E}_{(\E, Q_\E)}$ of $(\E,Q_\E)$ with respect to the pullback connection $\nabla^\E$ in~\eqref{Eq: pullback connection} is given by
 \begin{align*}
   \At^{\nabla^\E}_{(\E,Q_\E)}(\Psi(a_K[1]), a_K^\prime[1]) &= [a_K, a_K^\prime]_K[1], \\
   \At_{(\E,Q_\E)}^{\nabla^\E}(\Psi(a_K[1]), b_1) &= \At_{(\E,Q_\E)}^{\nabla^\E}(\Psi(b_1), a_K[1]) = \Omega(b_1)(-,a_K) = -\left(R^{\bas}_{\nabla^A}(-,a_K)(j(b_1))\right)[1], \\
   \At_{(\E,Q_\E)}^{\nabla^\E}(\Psi(b_1), b_2) &= \alpha_B(b_1,b_2) + \beta_B(b_1,b_2),
 \end{align*}
 for all $a_K, a_K^\prime \in \Gamma(K)$ and $b_1,b_2 \in \Gamma(B)$, where $\alpha_B(b_1,b_2) \in \Omega_A^1(B)$ and $\beta_B(b_1,b_2) \in \Omega_A^2(K[1])$ are given, respectively, by
  \begin{align*}
    &\alpha_B(b_1,b_2)(a_0) = R^{\nabla^B}(\rho_A(a_0), j(b_1))b_2 =  \nabla^B_{\rho_A(a_0)}\nabla^B_{j(b_1)}b_2 - \nabla^B_{j(b_1)}\nabla^B_{\rho_A(a_0)}b_2 - \nabla^B_{[\rho_A(a_0), j(b_1)]}b_2, \\
    &\beta_B(b_1,b_2)(a_0,a_1) = \nabla_{j(b_1)}(\Omega)(b_2; a_0,a_1) \\
    &:= \nabla^K_{j(b_1)} \Omega(b_2)(a_0,a_1) - \Omega(\nabla^B_{j(b_1)}b_2)(a_0,a_1) - \Omega(b_2)(a_0, \nabla^A_{j(b_1)}a_1) - \Omega(b_2)(\nabla^A_{j(b_1)}a_0, a_1),
  \end{align*}
  for all $a_0, a_1 \in \Gamma(A)$ and $b_1, b_2 \in \Gamma(B)$.
\end{lemma}
\begin{proof}
Using the definition of the Atiyah cocycle, we compute case by case:
For all $a_K, a_K^\prime \in \Gamma(K)$, we have
   \begin{align*}
     &\quad \At_{(\E,Q_\E)}^{\nabla^\E}(\Psi(a_K[1]), a_K^\prime[1]) \\
    &= Q_\E(\nabla^\E_{\Psi(a_K[1])} a_K^\prime[1]) - \nabla^\E_{[d_A,\Psi(a_K[1])]}a_K^\prime[1] + \nabla^\E_{\Psi(a_K[1])}Q_\E(a_K^\prime[1]) \quad(\text{since $\Psi$ is a cochain map})\\
    &= Q_\E(\nabla^\E_{\Psi(a_K[1])} a_K^\prime[1]) - \nabla^\E_{\Psi(Q_\E(a_K[1]))}a_K^\prime[1] + \nabla^\E_{\Psi(a_K[1])}Q_\E(a_K^\prime[1]) \quad(\text{by Eqs.~\eqref{Eq: Def of Psi} and~\eqref{Eq: pullback connection}})\\
    &= Q_\E((\nabla^K_{\pi_\ast(I(a_K[1]))} a_K^\prime)[1]) - \nabla^K_{\pi_\ast (I(d_{\CE}a_K[1]))} a_K^\prime[1] + \nabla^\E_{I(a_K[1])}d_{\CE}(a_K^\prime)[1] \quad (\text{Since $\pi_\ast \circ I = 0$})\\
    &= \nabla^{\E}_{\iota_{a_K}} (d_{\CE}a_K^\prime)[1] = [a_K, a_K^\prime]_K[1].
   \end{align*}
   Similarly, for all $a_K \in \Gamma(K), b_1 \in \Gamma(B)$, we have
   \begin{align*}
   \At_{(\E,Q_\E)}^{\nabla^\E}(\Psi(a_K[1]), b_1) &= Q_\E(\nabla^\E_{\Psi(a_K[1])} b_1) - \nabla^\E_{[d_A,\Psi(a_K[1])]} b_1 + \nabla^\E_{\Psi(a_K[1])}Q_\E(b_1) \\
 &= \nabla^\E_{\iota_{a_K}} (d_{\CE}(b_1) - \Omega(b_1)) = -\iota_{a_K}\Omega(b_1) = \Omega(b_1)(-,a_K),
   \end{align*}
   and
   \begin{align*}
      \At_{(\E,Q_\E)}^{\nabla^\E}(\Psi(b_1), a_K[1]) &= Q_\E(\nabla^\E_{\Psi(b_1)} a_K[1]) - \nabla^\E_{[d_A,\Psi(b_1)]}a_K[1] -  \nabla^\E_{\Psi(b_1)}Q_\E(a_K[1]) \\
   &= Q_\E((\nabla^K_{\pi_\ast(\Psi(b_1))}a_K)[1]) - (\nabla^K_{\pi_\ast(\Psi(Q_\E(b_1)))}a_K)[1] - \nabla^\E_{\Psi(b_1)}Q_\E(a_K[1]) \\
   &= d_{\CE}((\nabla^K_{j(b_1)}a_K)[1]) - (\nabla^K_{j(d_{\CE}(b_1))}a_K)[1] - (\nabla^\E_{\nabla^A_{j(b_1)}}(d_{\CE}a_K)[1]),
   \end{align*}
thus, we have for all $a_0 \in \Gamma(A)$,
\begin{align*}
  &\qquad \At^{\nabla^\E}_{(\E,Q_\E)}(\Psi(b_1), a_K[1])(a_0) \\
  &= \left([a_0, \nabla^K_{j(b_1)}a_K]_A - \nabla^K_{\pr_B[\rho_A(a_0),j(b_1)]}a_K- \nabla^K_{j(b_1)}[a_0, a_K]_A + [\nabla^A_{j(b)}a_0, a_K]_A\right)[1] \\
  &=  \left([a_0, \nabla^K_{j(b_1)}a_K]_A - \nabla^K_{\pr_B[\rho_A(a_0),j(b_1)]}a_K - \nabla^K_{j(b_1)}[a_0, a_K]_A\right)[1] \\
  &\qquad+ [\nabla_{j(b_1)}^K(\pr_K(a_0)), a_K]_A[1] + (\nabla^K_{\pr_F[j(b_1), \rho_A(a_0)]}a_K)[1] \\
  &=  \left([\pr_K(a_0), \nabla^K_{j(b_1)}a_K]_K + [\nabla_{j(b_1)}^K(\pr_K(a_0)), a_K]_A - \nabla^K_{j(b_1)}[\pr_K(a_0), a_K]_A\right)[1] \\
  &\quad + \left(\nabla^K_{\rho_A(a_0)}\nabla^K_{j(b_1)}a_K - \nabla^K_{j(b_1)}\nabla^K_{\rho_A(a_0)}a_K + \nabla^K_{[j(b_1), \rho_A(a_0)]}a_K\right)[1] \\
  &=-R^{\bas}_{\nabla^A}(a_0, a_K)(j(b_1))[1] = \Omega(b_1)(a_0,a_K),
\end{align*}
which implies that
\[
\At^{\nabla^\E}_{(\E,Q_\E)}(\Psi(b_1), a_K[1]) = \Omega(b_1)(-, a_K) = \At^{\nabla^\E}_{(\E,Q_\E)}(\Psi(a_K[1]), b_1).
\]
Finally, for all $b_1, b_2 \in \Gamma(B)$, we have
\begin{align*}
   \At^{\nabla^\E}_{(\E,Q_\E)}(\Psi(b_1), b_2) &= Q_\E(\nabla^\E_{\Psi(b_1)}b_2) - \nabla^\E_{[d_A, \Psi(b_1)]}b_2 - \nabla^\E_{\Psi(b_1)}Q_\E(b_2) \quad \left(\text{since $\Psi$ is a cochain map}\right)\\
  &= Q_\E(\nabla^\E_{\Psi(b_1)}b_2) - \nabla^\E_{\Psi(Q_\E(b_1))}b_2 - \nabla^\E_{\Psi(b_1)}Q_\E(b_2) \quad \left(\text{by Eqs.~\eqref{Eq: Def of Psi} and~\eqref{Eq: pullback connection}}\right) \\
  &= Q_{\E}(\nabla^B_{j(b_1)}b_2) - \nabla^B_{d_{\CE}(b_1)}b_2 - \nabla^\E_{\nabla^A_{j(b_1)}}Q_{\E}(b_2) \\
  &= d_{\CE}(\nabla^B_{j(b_1)}b_2) - \nabla^B_{d_{\CE}(b_1)}b_2 - \nabla^\E_{\nabla^A_{j(b_1)}}d_{\CE}(b_2) + \nabla^\E_{\nabla^A_{j(b_1)}}\Omega(b_2) - \Omega(\nabla^B_{j(b_1)}b_2).
\end{align*}
These five terms are separated into the following two parts:
\begin{align*}
  \alpha_B(b_1, b_2) &:= d_{\CE}(\nabla^B_{j(b_1)}b_2) - \nabla^B_{d_{\CE}(b_1)}b_2 - \nabla^\E_{\nabla^A_{j(b_1)}}d_{\CE}(b_2) \in \Omega_A^1(B), \\
  \beta_B(b_1,b_2) &:= \nabla^\E_{\nabla^A_{j(b_1)}}\Omega(b_2) - \Omega(\nabla^B_{j(b_1)}b_2) \in \Omega_A^2(K[1]),
\end{align*}
satisfying for all $a_0, a_1 \in \Gamma(A)$,
\begin{align*}
  &\quad \alpha_B(b_1,b_2)(a_0) \\
  &= \nabla^B_{\rho_A(a_0)}\nabla^B_{j(b_1)}b_2 - \nabla^B_{\pr_B[\rho_A(a_0), j(b_1)]} b_2 \\
  &\qquad\qquad - \left( \nabla^\E_{\nabla^A_{j(b_1)}}\nabla^B_{\rho_A(a_0)}b_2 - \nabla^B_{\rho_A(\nabla^A_{j(b_1)}a_0)} b_2\right)  \qquad \left(\text{by Eqs.~\eqref{Eq: nablaA} and~\eqref{Eq: pullback connection}}\right)\\
  &= \nabla^B_{\rho_A(a_0)}\nabla^B_{j(b_1)}b_2 - \nabla^B_{\pr_B[\rho_A(a_0), j(b_1)]} b_2 - \left( \nabla^B_{j(b_1)}\nabla^B_{\rho_A(a_0)}b_2 - \nabla^B_{\pr_F[j(b_1),\rho_A(a_0)]} b_2\right) \\
  &= \nabla^B_{\rho_A(a_0)}\nabla^B_{j(b_1)}b_2 - \nabla^B_{[\rho_A(a_0), j(b_1)]} b_2 -  \nabla^B_{j(b_1)}\nabla^B_{\rho_A(a_0)}b_2 \\
  &= R^{\nabla^B}(\rho_A(a_0), j(b_1))b_2,
\end{align*}
and
\begin{align*}
  \beta_B(b_1,b_2)(a_0,a_1) &= (\nabla^\E_{\nabla^A_{j(b_1)}}\Omega(b_2))(a_0,a_1) - \Omega(\nabla^B_{j(b_1)}b_2)(a_0,a_1) \\
  &= \nabla^{K[1]}_{j(b_1)} \Omega(b_2)(a_0,a_1) - \Omega(b_2)(a_0, \nabla^A_{j(b_1)}a_1) - \Omega(b_2)(\nabla^A_{j(b_1)}a_0, a_1) \\
  &\qquad  - \Omega(\nabla^B_{j(b_1)}b_2)(a_0,a_1).
\end{align*}
\end{proof}
Theorem~\ref{Theorem on Atiyah class of A[1]} is an immediate consequence of Proposition~\ref{prop: invariance of Atiyah class} and Lemma~\ref{Lem: Atiyah cocycle}.

\subsection{Examples}
\subsubsection{Bundles of Lie algebras}
Recall that a bundle $K$ of Lie algebras over $M$ is a regular Lie algebroid $(K, [-,-]_K)$ with zero anchor.
For each linear connection $\nabla^K$ on $K$, the isomorphism in~\eqref{Eq: TA as representation UTH} becomes
\begin{equation}\label{Eq: isom of TK1}
  (\Gamma(T_{K[1]}), L_{d_K}) \xrightarrow{\cong} \left(\Omega_K(K[1] \oplus T_M), D_{\nabla^K} = \left(
                                                                                                   \begin{array}{cc}
                                                                                                     d^K_{\CE} & -\Omega \\
                                                                                                     0 & 0 \\
                                                                                                   \end{array}
                                                                                                 \right)
  \right),
\end{equation}
where $d^K_{\CE} \colon \Omega^\bullet_K(K[1]) \to \Omega_K^{\bullet+1}(K[1])$ is the Chevalley-Eilenberg differential of the $K$-module $K[1]$, and $\Omega$ is given by the basic curvature $R^{\bas}_{\nabla^K} \in \Omega_K^2(\Hom(T_M, K[1]))$ that is defined by
\[
 \Omega(u)(x,y) = -\left(R^{\bas}_{\nabla^K}(x,y)u\right)[1] := -\left(\nabla^K_u[x,y]_K - [\nabla^K_u x, y]_K - [x, \nabla^K_u y]_K\right)[1],
\]
for all $x, y \in \Gamma(K)$ and $u \in \Gamma(T_M)$.
It is clear that the basic curvature measures the compatibility between $\nabla^K$ and the Lie bracket $[-,-]_K$.

As an immediate consequence of Theorem~\ref{Theorem on Atiyah class of A[1]}, we obtain the following description on the Atiyah class of a bundle of Lie algebras.
\begin{proposition}\label{prop: Atiyah class of bundle of Lie algebras}
  Let $(K, [-,-]_K)$ be a bundle of Lie algebras over $M$. Under the isomorphism of cohomology induced from~\eqref{Eq: isom of TK1}
  \[
    H^1(\Gamma(T_{K[1]}^\ast \otimes \End(T_{K[1]})), L_{d_K}) \xrightarrow{\quad\cong\quad} H^1(K, (T^\ast_M \oplus K^\ast[-1]) \otimes \End(K[1] \oplus T_M)),
  \]
  where the right-hand side denotes the cohomology of the representation up to homotopy of $K$ on the tensor product $(T^\ast_M \oplus K^\ast[-1]) \otimes \End(K[1] \oplus T_M)$ of graded vector bundles over $M$. Then the Atiyah class $\At_{(K[1],d_K)}$ of the associated dg manifold $(K[1], d_K)$ is represented by the sum of three terms
  \[
   \alpha_K + \beta_{K} + \beta_M,
  \]
  where
  \begin{enumerate}
    \item $\alpha_K \in \Gamma(K^\ast[-1] \otimes \End(K[1]))$ is given by the Lie bracket $[-,-]_K$;
    \item $\beta_K \in \Omega_K^1(T^\ast_M \otimes \End(K[1]))$ is defined by
        \[
         \beta_K(u, x[1])(y) := \left(\nabla^K_u[x,y]_K - [\nabla^K_u x, y]_K - [x, \nabla^K_u y]_K\right)[1],
        \]
        for all $u \in \Gamma(T_M), x,y \in \Gamma(K)$;  Here $\nabla^K$ is a linear connection on $K$;
    \item $\beta_M \in \Omega_K^2(T^\ast_M \otimes T^\ast_M \otimes K[1])$ is defined by
    \begin{align*}
    \beta_M(u,v)(x,y) &= -\nabla^{K[1]}_u\beta_K(v,x[1])(y) + \beta_K(\nabla^M_u v, x[1])(y) + \beta_K(v, (\nabla^K_ux)[1])(y) \\
    &\quad + \beta_K(v, x[1])(\nabla^K_u y),
   \end{align*}
   for all $u,v\in \Gamma(T_M)$ and $x,y \in \Gamma(K)$.  Here $\nabla^K$ is a linear connection on $K$ and $\nabla^M$ is an affine connection on $M$.
  \end{enumerate}
\end{proposition}
In particular, if $(K, [-,-]_K)$ is a Lie algebra bundle, that is, the fiber Lie algebra is fixed in local trivialization, then there exists a linear connection $\nabla^K$ on $K$ such that its basic curvature $R^{\bas}_{\nabla^K}$ vanishes (see Proposition 2.13 in~\cite{AC}). Thus, we obtain the following corollary.
\begin{corollary}\label{corollary on Lie algebra bundle}
 Let $(K,[-,-]_K)$ be a Lie algebra bundle. Then the Atiyah class $\At_{(K[1],d_K)}$ of the associated dg manifold $(K[1],d_K)$ is represented by the Lie bracket $[-,-]_K$, viewed as a degree $1$ element in $\Gamma(K^\ast[-1] \otimes \End(K[1]))$.
\end{corollary}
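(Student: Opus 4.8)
The plan is to exploit the structural input quoted immediately before the statement: for a Lie algebra bundle the fiber Lie algebra is locally constant, so by \cite{AC}*{Proposition 2.13} there exists a linear connection $\nabla^K$ on $K$ whose basic curvature $R^{\bas}_{\nabla^K} \in \Omega_K^2(\Hom(T_M, K[1]))$ vanishes identically. I would fix such a connection at the outset and use it, together with an arbitrary affine connection $\nabla^M$ on $M$, to produce the representatives of all three terms $\alpha_K + \beta_K + \beta_M$ supplied by Proposition~\ref{prop: Atiyah class of bundle of Lie algebras}.

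First I would observe that, with this choice, the perturbation operator $\Omega$ vanishes. Indeed, the explicit formula recorded above reads $\Omega(u)(x,y) = -\left(R^{\bas}_{\nabla^K}(x,y)u\right)[1]$, so $R^{\bas}_{\nabla^K} = 0$ forces $\Omega = 0$ as a map $\Omega_K^\bullet(T_M) \to \Omega_K^{\bullet+2}(K[1])$. Consequently the two cohomologies $H^\bullet_{\CE}$ and $H^\bullet_{\Omega}$ collapse onto the ordinary Chevalley--Eilenberg cohomology, and the total differential is simply $D = d_{\CE}$.

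Next I would substitute $\Omega = 0$ into the explicit cochains of Proposition~\ref{prop: Atiyah class of bundle of Lie algebras}. The representative $\beta^{\nabla^K}_K(u, x[1])(y) = \left(\nabla^K_u[x,y]_K - [\nabla^K_u x, y]_K - [x, \nabla^K_u y]_K\right)[1]$ is precisely $\Omega(u)(x,y)$, hence vanishes. Since $\beta^\nabla_M$ is a linear combination of covariant derivatives of $\beta^{\nabla^K}_K$ taken against $\nabla^M$ and $\nabla^K$, the vanishing of $\beta^{\nabla^K}_K$ as a cochain forces every summand of $\beta^\nabla_M$ to vanish, so $\beta^\nabla_M = 0$ as well. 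Thus both $\beta_K$ and $\beta_M$ are represented by the zero cochain for this particular connection, leaving $\alpha_K$ as the only surviving component; by part (1) of Proposition~\ref{prop: Atiyah class of bundle of Lie algebras} it is represented by the Lie bracket $[-,-]_K$ regarded as a degree $1$ element of $\Gamma(K^\ast[-1] \otimes \End(K[1]))$.

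Finally I would note that since $\Omega = 0$ the surviving term $\alpha_K$ is automatically $D$-closed, so no additional closedness verification is required, and the asserted representative follows. The only genuine input is the existence of a basic-curvature-flat connection, which is exactly where the Lie algebra bundle hypothesis—strictly stronger than the bundle-of-Lie-algebras hypothesis of the proposition—enters; every other step is a mechanical specialization. I therefore expect the sole point demanding care to be the correct invocation of \cite{AC}*{Proposition 2.13}, with the rest being routine substitution.
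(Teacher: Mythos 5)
Your proposal is correct and follows exactly the paper's route: invoke \cite{AC}*{Proposition 2.13} to obtain a connection $\nabla^K$ with vanishing basic curvature, then observe that in Proposition~\ref{prop: Atiyah class of bundle of Lie algebras} the representatives of $\beta_K$ and $\beta_M$ are built from $R^{\bas}_{\nabla^K}$ and hence vanish identically, leaving only $\alpha_K$, represented by the bracket $[-,-]_K$. The paper leaves the substitution step implicit; your write-up simply makes it explicit, which is a faithful elaboration rather than a different argument.
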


\subsubsection{Integrable distributions}
Consider a regular Lie algebroid $A$ whose anchor map $\rho$ is injective. In this case, $A$ is identified with its characteristic distribution $F: =\rho_A(A) \subseteq T_M$, the tangent bundle of a regular foliation $\mathcal{F} \subseteq M$.
Note that $(T_M, F)$ is a Lie algebroid pair (or Lie pair for short) over $M$.
We briefly recall from~\cite{CSX} the Atiyah class of the Lie pair $(T_M, F)$.
For each splitting $j \colon B = T_M/F \to T_M$ of the short exact sequence~\eqref{SES} and a linear connection $\nabla^B$ on $B$ extending the Bott $F$-connection, there is a Chevalley-Eilenberg $1$-cocycle of $F$
\[
  \At_B^{\nabla^B} \in C^1(F; B^\ast \otimes \End(B)) = \Gamma(F^\ast \otimes B^\ast \otimes \End(B)),
\]
defined by
\[
  \At_B^{\nabla^B}(u_F, b_1)b_2 := R^{\nabla^B}(u_F, j(b_1))b_2 = \nabla^B_{u_F}\nabla^B_{j(b_1)} b_2- \nabla^B_{j(b_1)}\nabla^B_{u_F} b_2 - \nabla^B_{[u_F, j(b_1)]}b_2,
\]
for all $u_F \in \Gamma(F)$ and $b_1, b_2 \in \Gamma(B)$. The cohomology class
\[
 \At_B = \left[\At_B^{\nabla^B}\right] \in H^1_{\CE}(F; B^\ast \otimes \End(B))
\]
does not depend on the choice of $j$ and $\nabla^B$, and is called the Atiyah class of the Lie pair $(T_M, F)$, which is also known as the Molino class~\cite{Molino} of the foliation $\mathcal{F}$ induced from $F$.
Applying Theorem~\ref{Theorem on Atiyah class of A[1]} to this case, we recover the following result on Atiyah classes of integrable distributions.
\begin{proposition}[\cite{CXX}]\label{prop: Atiyah class of foliation}
    Let $F \subseteq T_M$ be an integrable distribution, i.e., the tangent bundle of some regular foliation in $M$. Then the isomorphism $\Phi_{2,1}$ for the integer pair $(2,1)$ in~\eqref{Eq: isom for mn tensor fields}, which now becomes
    \[
     \Phi_{2,1} \colon H^1(\Gamma(T^\ast_{F[1]} \otimes \End(T_{F[1]}))) \xrightarrow{\quad \cong \quad} H^1(\Omega_F(B^\ast \otimes \End(B)), d_{\CE}) = H^1_{\CE}(F; B^\ast \otimes \End(B)),
    \]
    sends the Atiyah class $\At_{(F[1],d_F)}$ of the dg manifold $(F[1], d_F)$ to the Atiyah class $\At_{B}$ of the Lie pair $(T_M, F)$.
\end{proposition}

\subsection{Functoriality with respect to Atiyah sequence}
Given a regular Lie algebroid $(A, \rho_A, [-,-]_A)$ with the Atiyah sequence
\begin{equation}\label{Eq: Atiyah sequence}
   0 \to K = \ker\rho_A \xrightarrow{i} A \xrightarrow{\rho_A} F \to 0,
\end{equation}
there are three Atiyah classes $\At_{(K[1],d_K)}, \At_{(A[1],d_A)}$ and $\At_{(F[1],d_F)}$.
we now study their relationship.

By Theorem~\ref{Theorem on Atiyah class of A[1]}, the Atiyah class $\At_{(A[1],d_A)}$ is related to  $\At_{(\E, Q_\E)}$ via the isomorphism
\[
  \At_{(A[1],d_A)} \in H^1(\Gamma(T_{A[1]}^\ast \otimes \End(T_{A[1]}))) \cong  H^1(A; E^\ast \otimes \End(E)) \cong H^1(\Gamma(T_{A[1]}^\ast \otimes \End(\E))) \ni \At_{(\E, Q_\E)}.
\]
Note that the inclusion $i$ in Sequence~\eqref{Eq: Atiyah sequence} induces an inclusion $i \colon (K[1],d_K) \hookrightarrow (A[1],d_A)$ of dg manifolds.
The restriction of the homological vector field $Q_\E$ onto $\E\!\mid_{K[1]} = i^\ast \E$
 \[
   Q_\E\!\mid_{K[1]}  = \left(
           \begin{array}{cc}
             d^K_{\CE} & - i^\ast \Omega \\
             0 & 0 \\
           \end{array}
         \right) \colon \Gamma(\E\!\mid_{K[1]}) = \Omega_K(K[1] \oplus B) \to \Omega_K(K[1] \oplus B)[1]
 \]
makes $(\E\!\!\mid_{K[1]}, Q_\E\!\!\mid_{K[1]})$ into a dg vector bundle over the dg manifold $(K[1],d_K)$.
Here $d_{\CE}^K$ is the Chevalley-Eilenberg differential of the adjoint $K$-module $K[1]$, and $i^\ast \Omega \in \Omega_K^2(B^\ast \otimes K[1])$ is the pullback of $\Omega$ along the inclusion $i$.

\begin{theorem}\label{Thm:Funtoriality}
    Let $A$ be a regular Lie algebroid with Atiyah Sequence~\eqref{Eq: Atiyah sequence}.
    \begin{enumerate}
    \item The standard projection map
      \[
         \operatorname{Pr} \colon H^1(A; E^\ast \otimes \End(E)) \to H^1_{\CE}(A; B^\ast \otimes \End(B))
      \]
      sends the Atiyah class $\At_{(\E,Q_\E)} \in  H^1(\Gamma(T_{A[1]}^\ast \otimes \End(\E)))  \cong H^1(A; E^\ast \otimes \End(E))$ of the dg vector bundle $(\E,Q_\E)$, which is related to the Atiyah class $\At_{(A[1],d_A)}$ of $A$ via an isomorphism according to Theorem~\ref{Theorem on Atiyah class of A[1]}, to the pullback class $\rho_A^\ast(\At_B)$ of the Atiyah class $\At_B \in H^1_{\CE}(F; B^\ast \otimes \End(B))$ of the Lie pair $(T_M, F)$, the latter of which is related to the Atiyah class $\At_{(F[1],d_F)}$ of the dg manifold $(F[1],d_F)$ via an isomorphism according to Proposition~\ref{prop: Atiyah class of foliation}.
      \item For each splitting $j \colon B \to T_M$ of the short exact sequence~\eqref{SES}, there exists an inclusion
     \[
        H^1(K; E^\ast \otimes \End(E)) \hookrightarrow H^1(K, (T_M^\ast \oplus K^\ast[-1]) \otimes \End(K[1] \oplus T_M)) \cong  H^1(\Gamma(T_{K[1]}^\ast \otimes \End(T_{K[1]}))).
     \]
      The Atiyah class $\At_{(K[1],d_K)} $ of the bundle $K$ of Lie algebras, which lives in $H^1(K; E^\ast \otimes \End(E))$, is equal to the Atiyah class of the dg vector bundle $(\E\!\mid_{K[1]}, Q_\E\!\mid_{K[1]})$ over $(K[1],d_K)$:
      \begin{align*}
        \At_{(\E \mid_{K[1]}, Q_\E \mid_{K[1]})} \in H^1(K; E^\ast \otimes \End(E)) &\subset H^1(K;  (T_M^\ast \otimes K^\ast[-1]) \otimes \End(E)) \\
        &\cong H^1(\Gamma(T_{K[1]}^\ast \otimes \End(\E\!\mid_{K[1]}))).
      \end{align*}
  Therefore, the restriction map
  \[
      i^\ast \colon H^1(A; E^\ast \otimes \End(E)) \to H^1(K; E^\ast \otimes \End(E))\hookrightarrow  H^1(\Gamma(T_{K[1]}^\ast \otimes \End(T_{K[1]}))).
  \]
  sends the Atiyah class $\At_{(\E,Q_\E)} \in  H^1(\Gamma(T_{A[1]}^\ast \otimes \End(\E)))  \cong H^1(A; E^\ast \otimes \End(E))$ of $(\E, Q_\E)$, to the Atiyah class $\At_{(K[1],d_K)}$.
    \end{enumerate}
\end{theorem}
\begin{proof}
The first statement follows immediately from Theorem~\ref{Theorem on Atiyah class of A[1]}.
For the second one, let us fix the quadruple $(j,\tau,\nabla^K, \nabla^F)$ as in~\eqref{Eq: quadruple} and choose an affine connection $\nabla^M$ satisfying
\begin{align*}
  \nabla^M_{u_F} j(b) &= \pr_B [u_F, j(b)], & \nabla^M_u u_F &= \nabla^F_u u_F,
\end{align*}
for all $u_F \in \Gamma(F), u \in \Gamma(T_M)$, and $b \in \Gamma(B)$.
The dual map $\pr_B^\ast \colon B^\ast \to T_M^\ast$ of the projection $\pr_B \colon T_M \hookrightarrow B$ and the chosen splitting $j \colon B \to T_M$ induce an inclusion of the cohomology spaces
\[
  H^1(K; E^\ast \otimes \End(E)) \hookrightarrow H^1(K; (T_M^\ast \otimes K^\ast[-1]) \otimes \End(K[1] \oplus T_M)).
\]
By Proposition~\ref{prop: Atiyah class of bundle of Lie algebras}, the Atiyah class $\At_{(K[1],d_K)}$ is represented by a formal sum
\[
 \alpha_K + \beta_{K} + \beta_M.
\]
Here $\alpha_K$ is the Lie bracket $[-,-]_K$ on $\Gamma(K)$, and the element $\beta_{K} \in \Omega_K^1(T^\ast_M \otimes \End(K[1]))$ satisfies for all $x, y \in \Gamma(K)$ and $u_F \in \Gamma(F)$,
\begin{align}\label{Eq: beta FKK = 0}
 \beta_{K}(u_F, x[1]) (y) &= \left(\nabla^K_{u_F}[x,y]_K - [\nabla^K_{u_F} x, y]_K - [x, \nabla^K_{u_F} y]_K\right)[1] \notag \\
                                              &= \left([\tau(u_F), [x,y]_K]_A - [[\tau(u_F), x]_A, y]_K - [x, [\tau(u_F), y]_A]_K\right)[1] \notag \\
                                              &= 0.
\end{align}
Thus, we have
\[
 \beta_K = i^\ast\beta_A \in \Omega^1(K; B^\ast \otimes \End(K[1])) \subset \Omega^1(K; T^\ast_M \otimes \End(K[1])).
\]
Meanwhile, by Equation~\eqref{Eq: beta FKK = 0}, the element $\beta_M \in \Omega_K^2(T^\ast_M \otimes T^\ast_M \otimes K[1])$ satisfies
\begin{align*}
  &\quad \beta_M(u, v_F)(x,y) \\
  &= \nabla^{K[1]}_{u}(\beta_K(v_F; x[1])(y)) - \beta_K(\nabla^M_{u}v_F, x[1])(y) - \beta_K(v_F, (\nabla^K_{u}x)[1])(y) - \beta_K(v_F, x[1])(\nabla^K_{u}y)\\
  & = 0,
\end{align*}
for all $u \in \Gamma(T_M), v_F \in \Gamma(F), x, y \in \Gamma(K)$, and
\begin{align*}
\beta_M(u_F, j(b))(x, y) &=  \nabla^{K[1]}_{u_F}(\beta^{\nabla^K}_K(j(b), x[1])(y)) \\
&\quad- \beta^{\nabla^K}(\nabla^M_{u_F}j(b), x[1])(y) - \beta^{\nabla^K}(j(b), (\nabla^K_{u_F}x)[1])(y) - \beta^{\nabla^K}(j(b), x[1]) (\nabla^K_{u_F}y) \\
&= \left(R^{\nabla^K}(u_F, j(b))[x,y]_K - [R^{\nabla^K}(u_F,j(b))x, y]_K - [x, R^{\nabla^K}(u_F,j(b))y]_K\right)[1] \\
&= (d_{\CE}^K(R^{\nabla^K})(u_F, j(b))(x,y))[1],
\end{align*}
for all $u_F \in \Gamma(F), b \in \Gamma(B)$ and $x,y \in \Gamma(K)$. Thus, we also have
\[
  \beta_M = i^\ast \beta_B \in H^1(K; B^\ast \otimes B^\ast \otimes K[1]) \subset H^1(K; T^\ast_M \otimes T^\ast_M \otimes K[1]).
\]
Hence, applying Theorem~\ref{Theorem on Atiyah class of A[1]}, we obtain
\begin{align*}
   \At_{(K[1],d_K)} &= \alpha_K + \beta_K + \beta_M = i^\ast(\alpha_A + \beta_A + \beta_B) \\
   &= i^\ast(\At_{(\E,Q_\E)}) = \At_{(\E\mid_{K[1]}, Q_\E\mid_{K[1]})}.
\end{align*}
\end{proof}
As an immediate consequence, we obtain the following vanishing result.
\begin{corollary}
  If the Atiyah class $\At_{(A[1],d_A)}$ of a regular Lie algebroid $A$ vanishes, then both the Atiyah class $\At_{(K[1],d_K)}$ of the bundle $K=\ker\rho_A$ of Lie algebras and the Atiyah class $\At_{(F[1],d_F)}$ of the characteristic distribution $F = \operatorname{Im}\rho_A$ vanish.
\end{corollary}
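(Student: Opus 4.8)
The plan is to derive this vanishing statement directly from the functoriality established in Theorem~\ref{Thm:Funtoriality}, using nothing more than the linearity of the two comparison maps appearing there. Since the hypothesis and conclusion both concern the zero class, no genuine computation is required: the whole argument amounts to tracking the image of $\At_{(A[1],d_A)} = 0$ under each map.

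First, for the kernel bundle $K$, I would invoke Theorem~\ref{Thm:Funtoriality}(2), which asserts that the restriction map
\[
  i^\ast \colon H^1(A; E^\ast \otimes \End(E)) \to H^1(K; E^\ast \otimes \End(E))
\]
carries $\At_{(A[1],d_A)}$ to $\At_{(K[1],d_K)}$. Because $i^\ast$ is a homomorphism of cohomology groups, induced by pullback along the dg inclusion $i \colon (K[1],d_K) \hookrightarrow (A[1],d_A)$, it necessarily sends the zero class to the zero class. Hence the assumption $\At_{(A[1],d_A)} = 0$ forces $\At_{(K[1],d_K)} = i^\ast(\At_{(A[1],d_A)}) = 0$.

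Second, for the characteristic distribution $F$, I would apply Theorem~\ref{Thm:Funtoriality}(1): the projection map $\operatorname{Pr}$ sends $\At_{(A[1],d_A)}$ to $\rho_A^\ast(\At_B)$. By linearity of $\operatorname{Pr}$, the vanishing $\At_{(A[1],d_A)} = 0$ yields $\rho_A^\ast(\At_B) = 0$. Proposition~\ref{prop: Atiyah class of foliation} then identifies this class, under an isomorphism of cohomology groups, with the Atiyah class $\At_{(F[1],d_F)}$; since any such isomorphism preserves the zero class, we conclude $\At_{(F[1],d_F)} = 0$.

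I expect essentially no obstacle here, as the result is a formal consequence of functoriality. The only point deserving a moment's care is that the comparison in Theorem~\ref{Thm:Funtoriality}(1) factors through the intermediate class $\rho_A^\ast(\At_B)$ rather than identifying $\operatorname{Pr}(\At_{(A[1],d_A)})$ with $\At_{(F[1],d_F)}$ on the nose; one must therefore chain the vanishing $\rho_A^\ast(\At_B) = 0$ with the cohomology isomorphism of Proposition~\ref{prop: Atiyah class of foliation} to reach the stated conclusion about $\At_{(F[1],d_F)}$.
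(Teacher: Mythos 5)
Your treatment of the kernel half is fine and is exactly what the paper intends: the paper offers no written proof (the corollary is stated as an ``immediate consequence'' of Theorem~\ref{Thm:Funtoriality}), and part (2) of that theorem literally says that $i^\ast(\At_{(A[1],d_A)}) = \At_{(K[1],d_K)}$ under the identification of $\At_{(K[1],d_K)}$ with the Atiyah class of $(\E\!\mid_{K[1]}, Q_\E\!\mid_{K[1]})$; since $i^\ast$ is a homomorphism of cohomology groups, the zero class goes to the zero class.

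The half concerning $F$, however, has a genuine gap. From Theorem~\ref{Thm:Funtoriality}(1) and $\At_{(A[1],d_A)}=0$ you correctly deduce $\rho_A^\ast(\At_B)=0$ in $H^1_{\CE}(A;B^\ast\otimes\End(B))$. But Proposition~\ref{prop: Atiyah class of foliation} identifies $\At_{(F[1],d_F)}$ with $\At_B\in H^1_{\CE}(F;B^\ast\otimes\End(B))$, \emph{not} with the pullback class $\rho_A^\ast(\At_B)$, which lives in a different cohomology group; the isomorphism of that proposition cannot be ``chained'' with the vanishing of $\rho_A^\ast(\At_B)$ as you propose, because $\rho_A^\ast(\At_B)$ is not in its domain. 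What your argument actually requires is the implication $\rho_A^\ast(\At_B)=0\Rightarrow\At_B=0$, i.e., injectivity of $\rho_A^\ast\colon H^1_{\CE}(F;B^\ast\otimes\End(B))\to H^1_{\CE}(A;B^\ast\otimes\End(B))$. This is asserted nowhere in the paper and is not a formality: pullback along a surjective Lie algebroid morphism need not be injective on cohomology in general. Fortunately it does hold in degree one, which is why the corollary is true, and the fix is short. The $A$-module structure on $B^\ast\otimes\End(B)$ is the pullback along $\rho_A$ of the $F$-module structure coming from the Bott connection, and the degree-zero cochain spaces of the two Chevalley--Eilenberg complexes coincide, both being $\Gamma(B^\ast\otimes\End(B))$. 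So if $\rho_A^\ast\alpha$ is a coboundary, say $\rho_A^\ast\alpha = d_{\CE}\beta$ with $\beta\in\Gamma(B^\ast\otimes\End(B))$, then evaluating on $a\in\Gamma(A)$ gives $\alpha(\rho_A(a)) = \nabla^{\Bott}_{\rho_A(a)}\beta$; since $\rho_A\colon\Gamma(A)\to\Gamma(F)$ is surjective, this says $\alpha(u_F)=\nabla^{\Bott}_{u_F}\beta$ for all $u_F\in\Gamma(F)$, i.e., $\alpha$ is itself a coboundary in $\Omega_F(B^\ast\otimes\End(B))$, whence $\At_B=0$ and then $\At_{(F[1],d_F)}=0$ by Proposition~\ref{prop: Atiyah class of foliation}. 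With this one observation inserted your proof closes; as written, the final implication is unjustified.
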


\section{Scalar Atiyah and Todd classes}
\subsection{Scalar Atiyah classes}
Now we study the scalar Atiyah classes of the dg manifold $(A[1], d_A)$.
As the first step, by taking tensor product on the isomorphism in Corollary~\ref{corollary on TastA}, we obtain an isomorphism between the cohomology of differential $k$-forms on the dg manifold $(A[1], d_A)$ of total degree $k$ and the cohomology of representation up to homotopy of $A$ on the graded vector bundle $\wedge^k E^\ast \cong \wedge^k (K^\ast[-1] \oplus B^\ast)$ of total degree $k$ , i.e.,
\begin{equation}\label{Eq: isom for kforms}
  H^k(\Omega^k(A[1]), L_{d_A}) \cong H^k(A; \wedge^k E^\ast) = \bigoplus_{q=0}^k H^k(A; \wedge^{k-q} B^\ast \otimes (S^q K^\ast)[-q]).
\end{equation}
Recall that the differential on $\Omega_A(\wedge^kE^\ast)$ is induced by Leibniz rule from the differential $Q_{\E^\ast} = d_{\CE} + \Omega^\ast$ on $\E^\ast$, where $\Omega^\ast \in \Omega_A^2(\Hom(K^\ast[-1], B^\ast))$ is the $\Omega_A$-linear dual of $\Omega \in \Omega_A^2(\Hom(B,K[1]))$.
The projection onto the $(q=0)$-component defines a cochain map
\[
 \operatorname{Pr}\colon \left(\bigoplus_{q=0}^k \Omega_A(\wedge^{k-q} B^\ast \otimes (S^q K^\ast)[-q]), Q_{\E^\ast}\right) \to (\Omega_A(\wedge^k B^\ast), d_{\CE}),
\]
thus induces a projection from the cohomology $H^k(A; \wedge^k E^\ast)$ of the representation up to homotopy of $A$ on the graded vector bundle $\wedge^k E^\ast$ of total degree $k$ to the $k$-th Chevalley-Eilenberg cohomology $H^k_{\CE}(A; \wedge^k B^\ast)$ of the $A$-module $\wedge^k B^\ast$
\begin{equation}\label{Eq: projection on cohomology of kforms}
  \operatorname{Pr} \colon H^k(A; \wedge^k E^\ast) \to H^k_{\CE}(A; \wedge^k B^\ast).
\end{equation}
Note that each homogeneous element $T \in \Omega_A(E^\ast \otimes \End(E))$ has the matrix form
\[
\left(
  \begin{array}{cc}
    T_1 & T_2 \\
    T_3 & T_4 \\
  \end{array}
\right),
\]
where
\begin{align*}
  T_1 &\in \Omega_A(E^\ast \otimes \End(K[1])), & T_2 &\in \Omega_A(E^\ast \otimes \Hom(B,K[1])), \\
  T_3 &\in \Omega_A(E^\ast \otimes \Hom(K[1],B)), & T_4 &\in \Omega_A(E^\ast \otimes \End(B)).
\end{align*}
The supertrace $\str(T)$ of $T$ is defined by
\[
 \str(T):= \tr(T_4) - \tr(T_1) \in \Omega_A(E^\ast).
\]
With the help of Theorem~\ref{Theorem on Atiyah class of A[1]}, we obtain the following description on scalar Atiyah classes of $(A[1],d_A)$.
\begin{proposition}\label{prop on scaler atiyah classes}
  Let $A$ be a regular Lie algebroid. For each choice of quadruple $(j,\tau,\nabla^K, \nabla^F)$, under the isomorphism in Equation~\eqref{Eq: isom for kforms}, the $k$-th scalar Atiyah class $\ch_k(A[1],d_A)$ of the associated dg manifold $(A[1],d_A)$ is represented by
  \begin{align*}
  &\frac{1}{k!}\left(\frac{i}{2\pi}\right)^k\left(\tr\left(\alpha_B^k\right) - \sum_{q=0}^k\frac{k!}{q!(k-q)!} \tr\left(\alpha_A^q \beta^{k-q}_A\right)\right),
  \end{align*}
  where $\alpha_A \in \Gamma(K^\ast[-1] \otimes \End(K[1])), \beta_A\in \Omega_A^1(B^\ast \otimes \End(K[1]))$ and $\alpha_B \in \Omega_A^1(B^\ast \otimes \End(B))$ are elements defined in Theorem~\ref{Theorem on Atiyah class of A[1]}, satisfying
  \begin{align*}
   \tr\left(\alpha_B^k\right) &\in \Omega_A^k(\wedge^k B^\ast) \subset \Omega_A^k(\wedge^k E^\ast), \\
   \tr\left(\alpha_A^q\beta^{k-q}_A\right) &\in \Omega^{k-q}_A (\wedge^{k-q}B^\ast \otimes (S^qK^\ast)[-q]),
\end{align*}
and
\[
  \frac{k!}{q!(k-q)!}d_{\CE}\left(\tr\left(\alpha_A^q\beta^{k-q}_A\right)\right) = \frac{k!}{(q+1)!(k-q-1)!} \Omega\left( \tr\left(\alpha_A^{q+1}\beta^{k-q-1}_A\right)\right),
\]
for all $0 \leq q \leq k$.
\end{proposition}
\begin{proof}
  By Theorem~\ref{Theorem on Atiyah class of A[1]}, the $k$-th power the Atiyah class is represented by
  \[
    \left(
                          \begin{array}{cc}
                            (\alpha_A + \beta_A)^k & \ast \\
                            0 & \alpha_B^k \\
                          \end{array}
                        \right),
  \]
  where $\alpha_A \in \Gamma(K^\ast[-1] \otimes \End(K[1]))$ is given by the Lie bracket $[-,-]_K$ on $\Gamma(K)$, $\beta_A \in \Omega_A^1(B^\ast \otimes \End(K[1]))$, and $\alpha_B \in \Omega_A^1(B^\ast \otimes \End(B))$.
  Thus, we have
  \begin{align} \label{Eq: str of Atk}
  \str(\At^k_{(A[1],d_A)}) &= \tr\left(\alpha_B^k\right) - \tr\left((\alpha_A + \beta_A)^k\right) = \tr\left( \alpha_B^k \right) - \sum_{q=0}^k\frac{k!}{q!(k-q)!} \tr\left(\alpha^q_A\beta^{k-q}_A\right).
  \end{align}
Substituting into the definition of scalar Atiyah classes, we complete the proof.
\end{proof}
As an immediate application, we recall that the scalar Atiyah classes of the Lie pair $(T_M, F)$~\cite{CSX} is defined by
\[
 \ch_k(B):= \frac{1}{k!}\left(\frac{i}{2\pi}\right)^k \tr(\At_B^k),
\]
where $\At_B^k$ denotes the image of $\At_B^{\otimes k}$ under the natural map
\[
  H^1_{\CE}(F; B^\ast \otimes \End(B)) \times \cdots \times H^1_{\CE}(F; B^\ast \otimes \End(B)) \to H^k_{\CE}(F; \wedge^k B^\ast)
\]
induced by the composition in $\End(B)$ and the wedge product in $\wedge B^\ast$.
Hence, applying Proposition~\ref{prop: Atiyah class of foliation}  and Proposition~\ref{prop on scaler atiyah classes} to this particular regular Lie algebroid $F$, we recover the following result on scalar Atiyah classes of integrable distributions.
\begin{proposition}[\cite{CXX}]\label{prop: scalar Atiyah classes for integrable distributions}
  Let $F \subseteq T_M$ be an integrable distribution, i.e., the tangent bundle of some regular foliation in $M$. Then the isomorphism~\eqref{Eq: isom for kforms} becomes
  \[
    H^k(\Omega^k(F[1]), L_{d_F}) \cong H^k_{\CE}(F; \wedge^k B^\ast),
  \]
   which sends the scalar Atiyah classes $\ch_k(F[1],d_F)$ of the dg manifold $(F[1],d_F)$  to the scalar Atiyah classes $\ch_k(B)$ of the Lie pair $(T_M, F)$.
\end{proposition}
Finally, using Theorem~\ref{Thm:Funtoriality} and Proposition~\ref{prop on scaler atiyah classes}, we see that scalar Atiyah classes of dg manifolds arising from a regular Lie algebroid respect the associated Atiyah sequence.
\begin{proposition}\label{prop: functoriality on scalar Atiyah classes}
Let $A$ be a regular Lie algebroid with Atiyah Sequence~\eqref{Eq: Atiyah sequence}. For each positive integer $k$,
\begin{enumerate}
\item the projection map $\operatorname{Pr}$ in~\eqref{Eq: projection on cohomology of kforms} sends the $k$-th scalar Atiyah class of $A$
     \[
       \ch_k(A[1],d_A) \in H^k(\Omega^k(A[1]), L_{d_A}) \cong H^k(A; \wedge^k E^\ast)
     \]
     to the pullback class $\rho_A^\ast(\ch_k(B))$ of the $k$-th scalar Atiyah class $\ch_k(B)$ of the Lie pair $(T_M, F)$, which is related to the $k$-th scalar Atiyah class $\ch_k(F[1],d_F)$ of the integrable distribution $F$ by an isomorphism according to Proposition~\ref{prop: scalar Atiyah classes for integrable distributions}.
\item the restriction map
\[
  i^\ast \colon H^k(A; \wedge^k E^\ast) \to H^k(K; \wedge^k E^\ast) \subset H^k(\Omega^k(K[1]), L_{d_K})
\]
induced from the inclusion $i \colon K \hookrightarrow A$ sends the $k$-th scalar Atiyah class $\ch_k(A[1],d_A)$ of $A$ to the $k$-th scalar Atiyah class $\ch_k(K[1],d_K)$ of $K$.
\end{enumerate}
\end{proposition}

\subsection{Todd classes}
We now study the Todd class of the dg manifold $(A[1],d_A)$.
Let
\[
  P(x) = \frac{x}{1-e^{-x}} = \sum_{k \geq 0} \frac{(-1)^k}{k!}B_k x^k,
\]
where $B_k$ is the k-th Bernoulli number.

\begin{proposition}\label{prop on Todd class}
  Let $A$ be a regular Lie algebroid. For each choice of quadruple $(j,\tau,\nabla^K, \nabla^F)$, under the isomorphism
  \[
   \prod_{k\geq 0}  H^k(\Omega^k(A[1]), L_{d_A}) \cong \prod_{k\geq 0} \bigoplus_{q=0}^k H^k(A; \wedge^{k-q} B^\ast \otimes (S^q K^\ast)[-q]),
  \]
  the Todd class $ \Td_{(A[1],d_A)} $ of the dg manifold $(A[1], d_A)$ arising from a regular Lie algebroid $A$ is represented by
  \begin{align*}
\det(P(\alpha_B))\det(P^{-1}(\alpha_A + \beta_A))  &=\det(P(\alpha_B))
  \exp\left(\sum_{k\geq 1}\sum_{q=0}^{k}\frac{B_k}{k}\frac{\tr\left( \alpha^q_A \beta^{k-q}_A \right)}{q!(k-q)!}\right),
  \end{align*}
  where $\alpha_A \in \Gamma(K^\ast[-1] \otimes \End(K[1])), \beta_A\in \Omega_A^1(B^\ast \otimes \End(K[1]))$ and $\alpha_B \in \Omega_A^1(B^\ast \otimes \End(B))$ are elements defined in Theorem~\ref{Theorem on Atiyah class of A[1]}.
\end{proposition}
\begin{proof}
Substituting the Atiyah cocycle $\At^\nabla:= \alpha_A + \beta_A+\alpha_B+\beta_B$ representing the Atiyah class $\At_{(\E,Q_\E)}$ in Theorem~\ref{Theorem on Atiyah class of A[1]} into the definition of Todd class, we see that the Todd class $\Td_{(A[1],d_A)}$ is indeed represented by
\begin{align*}
\Ber (P(\At^\nabla)) &=
   \Ber\left(P\left( \left(
                         \begin{array}{cc}
                           \alpha_A + \beta_A & \beta_A + \beta_B \\
                           0 & \alpha_B \\
                         \end{array}
                       \right)
  \right)\right) \\
  &= \Ber\left(\left(
                 \begin{array}{cc}
                   P(\alpha_A + \beta_A) & \ast \\
                   0 & P(\alpha_B) \\
                 \end{array}
               \right)
  \right) \\
  &= \det(P(\alpha_B))\det(P^{-1}(\alpha_A + \beta_A)).
\end{align*}
Note further that we can rewrite the polynomial $P(x)$ in the following way (cf.~\cite{HBJ}):
\[
P(x) = \exp\left(-\sum_{k \geq 1} \frac{B_k}{k}\frac{x^k}{k!}\right).
\]
It follows that
\begin{align*}
  \Ber (P(\At^\nabla))  &= \Ber\left(\exp\left(- \sum_{k \geq 1} \frac{B_k}{k}\frac{\At_{(A[1], d_A)}^k}{k!}\right)\right) \\
  &= \exp\left(- \sum_{k\geq 1}\frac{B_k}{k}\frac{\str\left(\At_{(A[1],d_A)}^k\right)}{k!} \right) \qquad (\text{by Eq.~\eqref{Eq: str of Atk}})\\
  &= \exp\left(- \sum_{k\geq 1}\frac{B_k}{k}\frac{\tr\left(\alpha_B^k\right)}{k!}\right)
  \exp\left(\sum_{k \geq 1}\sum_{q=0}^{k}\frac{B_k}{k}\frac{\tr\left( \alpha^q_A \beta^{k-q}_A \right)} {q!(k-q)!}\right) \\
 &= \det(P(\alpha_B))\exp\left(\sum_{k\geq 1} \sum_{q=0}^{k}\frac{B_k}{k} \frac{\tr\left( \alpha^q_A \beta^{k-q}_A \right)}{q!(k-q)!}\right).
\end{align*}
\end{proof}

Recall that the Todd class~\cite{CSX} of the Lie pair $(T_M, F)$ is the cohomology class
\[
 \Td_B = \det(P(\At_B)) = \det \left(\frac{\At_B}{1 - e^{-\At_B}} \right) \in \bigoplus_{k \geq 0}H^k_\CE(F; \wedge^k B^\ast).
\]
Applying Theorem~\ref{Theorem on Atiyah class of A[1]} and Proposition~\ref{prop on Todd class}, we recover the following result on Todd classes of integrable distributions.
\begin{corollary}[\cite{CXX}]\label{prop: Todd class of integrable distribution}
  Let $F \subseteq T_M$ be an integrable distribution, i.e., the tangent bundle of some regular foliation in $M$. Then the isomorphism
  \[
   \prod_{k\geq 0} H^k(\Omega^k(F[1]), L_{d_F}) \cong \prod_{k\geq 0} H_{\CE}^k(F; \wedge^{k} B^\ast),
  \]
  sends the Todd classes $\Td_{(F[1],d_F)}$ of the dg manifold $(F[1],d_F)$  to the Todd classes $\Td_{B}$ of the Lie pair $(T_M, F)$.
\end{corollary}

Finally, applying Theorem~\ref{Thm:Funtoriality} and Proposition~\ref{prop on Todd class}, we see that Todd classes of dg manifolds arising from a regular Lie algebroid respect  the associated Atiyah sequence as well.
\begin{proposition}\label{prop: functoriality of Todd classes}
Let $A$ be a regular Lie algebroid with Atiyah Sequence~\eqref{Eq: Atiyah sequence}.
\begin{enumerate}
\item The projection map
\[
  \operatorname{Pr} \colon \prod_{k\geq 0} \bigoplus_{q=0}^k H^k(A; \wedge^{k-q} B^\ast \otimes (S^q K^\ast)[-q]) \to \prod_{k\geq 0} H^k_{\CE}(A; \wedge^k B^\ast)
\]
induced from~\eqref{Eq: projection on cohomology of kforms} sends the Todd class $\Td_{(A[1],d_A)}$ of $A$ to the pullback $\rho_A^\ast \Td_B$ of the Todd class $\Td_B$ of the Lie pair $(T_M, F)$, the latter of which is related to the Todd class $\Td_{(F[1],d_F)}$ of the integrable distribution $F$ via an isomorphism by Corollary~\ref{prop: Todd class of integrable distribution}.
\item Under the isomorphisms
\[
   \prod_{k\geq 0}  H^k(\Omega^k(A[1]), L_{d_A}) \cong \prod_{k\geq 0} \bigoplus_{q=0}^k H^k(A; \wedge^{k-q} B^\ast \otimes (S^q K^\ast)[-q]),
\]
and
 \[
   \prod_{k\geq 0}  H^k(\Omega^k(K[1]), L_{d_K}) \cong \prod_{k\geq 0} \bigoplus_{q=0}^k H^k(A; \wedge^{k-q} T_M^\ast \otimes (S^q K^\ast)[-q]),
 \]
the restriction map
\[
  \prod_{k\geq 0} \bigoplus_{q=0}^k H^k(A; \wedge^{k-q} B^\ast \otimes (S^q K^\ast)[-q]) \to
  \prod_{k\geq 0} \bigoplus_{q=0}^k H^k(K; \wedge^{k-q} T_M^\ast \otimes (S^q K^\ast)[-q])
\]
induced from the inclusion $i \colon K \hookrightarrow A$ and the projection $\pr_B \colon T_M \to B$ sends the Todd class $\Td_{(A[1],d_A)}$ of $A$ to the Todd class $\Td_{(K[1],d_K)}$ of $K$ represented by
\begin{align*}
 \det(P^{-1}(\alpha_K+\beta_K)) &= \det(P^{-1}(\alpha_K))\exp\left(\sum_{k\geq 1}\sum_{q=0}^{k-1}\frac{B_k}{k}\frac{\tr\left( \alpha^q_K \beta^{k-q}_K \right)}{q!(k-q)!}\right),
\end{align*}
where
\[
\det(P^{-1}(\alpha_K)) = \det\left(\frac{1-e^{-\alpha_K}}{\alpha_K}\right) \in \prod_{k\geq0}\Gamma((S^kK^\ast)[-k])
\]
consists of Duflo elements of this bundle of Lie algebras.
\end{enumerate}
\end{proposition}

\subsection{Application to locally splittable cases}
Assume that $A$ is a locally splittable regular Lie algebroid, i.e., the characteristic class of $A$
\[
[\omega] \in H^2_{\CE}(A; \Hom(B,K))
\]
vanishes. According to {Proposition 7.2} in~\cite{GsM}, one can choose a quadruple $(\tau, j, \nabla^K, \nabla^F)$ such that $\Omega = 0$.
By Corollary~\ref{corollary on TA}, we obtain a contraction
  \[
    \begin{tikzcd}
	(T_{A[1]}, L_{d_A}) \arrow[loop left, distance=2em, start anchor={[yshift=-1ex]west}, end anchor={[yshift=1ex]west}]{}{H} \arrow[r,yshift = 0.7ex, "\Phi"] & (\E = \pi^\ast(K[1] \oplus B), Q_\E = d_{\CE}), \arrow[l,yshift = -0.7ex, "\Psi"]
	\end{tikzcd}
  \]
and thus an isomorphism on the cohomology level
\[
  H^\bullet(\Gamma(T_{A[1]}), L_{d_A}) \cong H^\bullet_{\CE}(A; E) = H^\bullet_{\CE}(A; K[1] \oplus B) = \bigoplus_n H^n_{\CE}(A; K[1] \oplus B),
\]
where $H^n_{\CE}(A; K[1] \oplus B):= H(\Omega^{n+1}_A(K[1]), d_{\CE}) \oplus H(\Omega^n_A(B), d_{\CE})$ is the Chevalley-Eilenberg cohomology of the graded $A$-module $K[1] \oplus B$ of \textbf{total degree} $n$.

By taking dual and tensor product, we obtain an isomorphism
\[
 H^\bullet(\Gamma(T^\ast _{A[1]} \otimes \End(T_{A[1]})), L_{d_A}) \cong H_{\CE}^\bullet(A; E^\ast \otimes \End(E)) = H_{\CE}^\bullet(A; (B^\ast \oplus K^\ast[-1]) \otimes \End(K[1] \oplus B)).
\]
In particular, we have
\begin{align*}
 H^1(\Gamma(T^\ast _{A[1]} \otimes \End(T_{A[1]})),& L_{d_A}) \cong H_{\CE}^1(A; (B^\ast \oplus K^\ast[-1]) \otimes \End(K[1] \oplus B)) \\
 &= H_{\CE}^1(A; K^\ast[-1] \otimes \End(K[1])) \oplus H_{\CE}^1(A; (K^\ast[-1] \oplus B^\ast) \otimes \Hom(B, K[1])) \\
 &\quad \oplus H^1_{\CE}(A; B^\ast \otimes \Hom(K[1], B)) \oplus H^1_{\CE}(A; B^\ast \otimes \End(B)).
\end{align*}
Under this isomorphism, we would like to write each class $w \in H^1(\Gamma(T^\ast _{A[1]} \otimes \End(T_{A[1]})), L_{d_A})$ in the following matrix form
\begin{equation}\label{Eq: matrix form for H1}
    \left(
      \begin{array}{cc}
        w_1 & w_2 \\
        w_3 & w_4 \\
      \end{array}
    \right),
\end{equation}
where
\begin{align*}
  w_1 &\in H_{\CE}^1(A; K^\ast[-1] \otimes \End(K[1])), & w_2 &\in H_{\CE}^1(A; (B^\ast \oplus K^\ast[-1]) \otimes \Hom(B,K[1])), \\
  w_3 &\in H^1_{\CE}(A; B^\ast \otimes \Hom(K[1],B)), & w_4 &\in H^1_{\CE}(A; B^\ast \otimes \End(B)).
\end{align*}
Meanwhile, in this case, the isomorphism~\eqref{Eq: isom for kforms} of cohomology of $k$-forms on the dg manifold $(A[1],d_A)$ becomes
\begin{equation}\label{Eq: Isom for kforms in locally splittable case}
 H^k(\Omega^k(A[1]), L_{d_A}) \cong \bigoplus_{q=0}^k H^k_{\CE}(A; \wedge^{k-q} B^\ast \otimes (S^q K^\ast)[-q]).
\end{equation}
\begin{lemma}\label{Lem: locally split}
    Let $A$ be a locally splittable regular Lie algebroid over $M$. Then the kernel $K$ of its anchor $\rho_A$ is a Lie algebra bundle, that is, the fiber Lie algebra is fixed in local trivialization of $K$.
\end{lemma}
\begin{proof}
According to Proposition 2.13 in~\cite{AC}, it suffices to show that there exists a linear connection on $K$ such that its basic curvature vanishes.
By assumption, one can choose a quadruple $(\tau, j, \nabla^K, \nabla^F)$ as in~\eqref{Eq: quadruple} such that $\Omega = 0$.
For all $a_K, a_K^\prime \in \Gamma(K), b \in \Gamma(B)$, and $u_F \in \Gamma(F)$,  we have
\begin{align*}
  R^{\bas}_{\nabla^K}(a_K, a_K^\prime)(u_F) &= \nabla^K_{u_F}[a_K, a_K^\prime]_K - [\nabla^K_{u_F}a_K, a_K^\prime]_K - [a_K, \nabla^K_{u_F}a_K^\prime]_K  \\
  &= [\tau(u_F), [a_K, a_K^\prime]_A]_A - [[\tau(u_F), a_K]_A, a^\prime_K]_A - [a_K, [\tau(u_F), a_K^\prime]_A]_A = 0,
\end{align*}
and
\[
 R^{\bas}_{\nabla^K}(a_K, a_K^\prime)j(b) = -\Omega(b)(a_K, a_K^\prime) = 0.
\]
Thus, the basic curvature $R^{\bas}_{\nabla^K}$ of the linear connection $\nabla^K$ on $K$ vanishes.
\end{proof}
As a consequence, the isomorphism~\eqref{Eq: isom of TK1} induced from the chosen linear connection $\nabla^K$ on $K$ becomes
\[
   (\Gamma(T_{K[1]}), L_{d_K}) \xrightarrow{\;\cong\;} (\Omega_K(K[1] \oplus T_M), d_{\CE}^K).
\]
By taking dual and tensor products on cohomology spaces, we obtain isomorphisms
\begin{equation}\label{Eq: Isom of 21 fields of K}
H^1(\Gamma(T_{K[1]}^\ast \otimes \End(T_{K[1]}))) \cong H^1_{\CE}(K; (T^\ast_M \oplus K^\ast[-1]) \otimes \End(K[1] \oplus T_M)),
\end{equation}
and
\begin{equation}\label{Eq: Isom for kform on K}
 H^k(\Gamma(T_{K[1]}), L_{d_K}) \cong \bigoplus_{q=0}^k H^k_{\CE}(K; \wedge^{k-q} T^\ast M \otimes (S^{q}K^\ast)[-q]) = \bigoplus_{q=0}^k  \Omega^{k-q}(M) \otimes_{C^\infty(M)} H^0_{\CE}(K; S^{q}K^\ast)[-q].
\end{equation}

\begin{theorem}\label{Thm B}
  Let $A$ be a locally splittable regular Lie algebroid, i.e., the characteristic class
  \[
  [\omega] \in H^2_{\CE}(A; \Hom(B,K))
  \]
  vanishes.
    \begin{enumerate}
  \item The Atiyah class $\At_{(A[1],d_A)} \in H^1(\Gamma(T_{A[1]}^\ast \otimes \End(T_{A[1]}))) \cong H_{\CE}^1(A; E^\ast \otimes \End(E))$ of the dg manifold $(A[1], d_A)$, when written in the matrix form as in~\eqref{Eq: matrix form for H1}, is of the block-diagonal type
  \[
    \At_{(A[1],d_A)} = \left(
                         \begin{array}{cc}
                           [\alpha_A] & 0 \\
                           0 & \rho_A^\ast\At_B \\
                         \end{array}
                       \right),
  \]
where
\begin{itemize}
\item  $[\alpha_A] \in  H^1_{\CE}(A, K^\ast[-1] \otimes \End(K[1]))$  is represented by the Lie bracket $[-,-]_K$ on $\Gamma(K)$, satisfying
 \begin{align*}
 i^\ast [\alpha_A] = \At_{(K[1],d_K)} &\in H^1_{\CE}(K, K^\ast[-1] \otimes \End(K[1])) \\
 &\subset H^1_{\CE}(K; (T^\ast_M \oplus K^\ast[-1]) \otimes \End(K[1] \oplus T_M))
 \end{align*}
  under the isomorphism~\eqref{Eq: Isom of 21 fields of K};
 \item $\rho_A^\ast\At_B \in H^1_{\CE}(A, B^\ast \otimes \End(B))$ is the pullback of the Atiyah class $\At_B \in H^1_{\CE}(F, B^\ast \otimes \End(B))$ of the Lie pair $(T_M, F)$ via the anchor map $\rho_A$.
 \end{itemize}
  \item Under the isomorphisms~\eqref{Eq: Isom for kforms in locally splittable case} and~\eqref{Eq: Isom for kform on K}, the $k$-th scalar Atiyah class $\ch_k(A[1],d_A)$  is given by the sum
  \[
   \ch_k(A[1],d_A) = \ch_k(K[1],d_K) + \rho_A^\ast(\ch_k(B)),
   \]
    where
    \begin{itemize}
   \item $\ch_k(K[1],d_K) \in H^0_{\CE}(K, S^kK^\ast)[-k]$ is the $k$-th scalar Atiyah class of $(K[1],d_K)$, which is also an element in $H^0_{\CE}(A, S^kK^\ast)[-k]$; and
    \item $\rho_A^\ast(\ch_k(B)) \in H^k_{\CE}(A; \wedge^kB^\ast)$ is the pullback (by the anchor map $\rho_A$) of the $k$-th scalar Atiyah class $\ch_k(B) \in H^k_{\CE}(F; \wedge^kB^\ast)$ of the Lie pair $(T_M, F)$.
    \end{itemize}
  \item Under the isomorphism
  \[
    \prod_{k \geq 0} H^k(\Omega^k(A[1]), L_{d_A}) \cong \prod_{k\geq 0} \bigoplus_{q=0}^k H^k_{\CE}(A; \wedge^{k-q} B^\ast \otimes (S^q K^\ast)[-q]).
  \]
  induced by~\eqref{Eq: Isom for kforms in locally splittable case}, the Todd class $\Td_{(A[1],d_A)} \in \prod_{k\geq 0} H^k(\Omega^k(A[1]), L_{d_A})$ is given by
  \[
       \Td_{(A[1],d_A)} = \Td_{(K[1],d_K)} \cdot \rho_A^\ast\Td_{B},
  \]
  where
  \begin{itemize}
  \item $\Td_{(K[1],d_K)} \in \prod_{k\geq 0} H^0_{\CE}(A; S^{k}K^\ast)[-k] \subset \prod_{k\geq 0} H^0_{\CE}(K; S^{k}K^\ast)[-k]$ is the Todd class of the dg manifold $(K[1], d_K)$, which is represented by the Duflo element of the Lie algebra bundle $K$;
  \item $\rho_A^\ast(\Td_B) \in \oplus_{k \geq 0} H^k_{\CE}(A; \wedge^k B^\ast) \subset \prod_{k\geq 0} \bigoplus_{q=0}^k  \Omega^{k-q}(M) \otimes_{C^\infty(M)} H^0_{\CE}(K; S^{q}K^\ast)[-q]$ is the pullback of the Todd class $\Td_B \in \oplus_{k \geq 0} H^k_{\CE}(F; \wedge^k B^\ast)$ of the Lie pair $(T_M, F)$.
  \end{itemize}
\end{enumerate}
\end{theorem}
\begin{proof}
By Lemma~\ref{Lem: locally split} and Corollary~\ref{corollary on Lie algebra bundle}, under the isomorphism~\eqref{Eq: Isom of 21 fields of K},
the Atiyah class $\At_{(K[1],d_K)}$ of the bundle $K = \ker \rho_A$ of Lie algebras arising from a locally splittable regular Lie algebroid $(A,\rho_A, [-,-]_A)$ lives in
\[
  H^1_{\CE}(K; K^\ast[-1]\otimes \End(K[1])) \subset H^1_{\CE}(K; (T^\ast_M \oplus K^\ast[-1]) \otimes \End(K[1] \oplus T_M)),
\]
and is represented by the Lie bracket $[-,-]_K$ on $\Gamma(K)$.

Applying Theorem~\ref{Theorem on Atiyah class of A[1]} to the locally splittable regular Lie algebroid $A$, we see that the Atiyah class of $(A[1], d_A)$
\[
 \At_{(A[1],d_A)} \cong \At_{(\E,Q_\E)} \in H^1_{\CE}(A; (K^\ast[-1] \oplus B^\ast) \otimes \End(K[1] \oplus B))
\]
when written in the matrix form as in~\eqref{Eq: matrix form for H1}, is represented by
   \[
    \left(
    \begin{array}{cc}
     \alpha_A & 0 \\
       0 & \alpha_B \\
      \end{array}
       \right),
   \]
where
\begin{itemize}
\item $\alpha_A$ is the Lie bracket $[-,-]_K$ on $\Gamma(K)$, satisfying $i^\ast [\alpha_A] = \At_{(K[1], d_K)}$;
\item $\alpha_B \in \Omega_A^1(B^\ast \otimes \End(B))$ coincides with the pullback $\rho_A^\ast \At^{\nabla^B}_B$ of the Atiyah cocycle $\At^{\nabla^B}_B$ of the Lie pair $(T_M, F)$.
\end{itemize}
This proves the first statement.

For the statement $(2)$, by Proposition~\ref{prop on scaler atiyah classes}, we have
\begin{align*}
   \ch_k(A[1],d_A) &= \frac{1}{k!}\left(\frac{i}{2\pi}\right)^k\left(\tr([\alpha_B]^k) - \tr([\alpha_A]^k)\right) \\
   &=  \frac{1}{k!}\left(\frac{i}{2\pi}\right)^k\tr\left([\alpha_B]^k\right) + \ch_k(K[1],d_K) \quad (\text{by the definition of $\alpha_B$ and $\At_B$})\\
   &=  \frac{1}{k!}\left(\frac{i}{2\pi}\right)^k\tr\left((\rho_A^\ast\At_B)^k\right) + \ch_k(K[1],d_K) \\
   &= \rho_A^\ast \ch_k(B) + \ch_k(K[1],d_K).
\end{align*}
For the statement $(3)$, we apply Proposition~\ref{prop on Todd class} to obtain
\begin{align*}
 \Td_{(A[1],d_A)} &= \det(P^{-1}([\alpha_A]))\cdot \det(P([\alpha_B])) = \det(P^{-1}([\alpha_A])) \cdot \det(P(\rho_A^\ast\At_B)) \\
 &= \det(P^{-1}([\alpha_A])) \cdot \rho_A^\ast(\det(P(\At_B))) \\
 &= \Td_{(K[1],d_K)} \cdot \rho_A^\ast(\Td_B).
\end{align*}
\end{proof}

\begin{bibdiv}
  \begin{biblist}

  \bib{Atiyah}{article}{
   author={Atiyah, Michael F.},
   title={Complex analytic connections in fibre bundles},
   journal={Trans. Amer. Math. Soc.},
   volume={85},
   date={1957},
   pages={181--207},
}
	
 \bib{AC}{article}{
   author={Abad, Camilo Arias},
   author={Crainic, Marius},
   title={Representations up to homotopy of Lie algebroids},
   journal={J. Reine Angew. Math.},
   volume={663},
   date={2012},
   pages={91--126},
}

 \bib{CSX}{article}{
   author={Chen, Zhuo},
   author={Sti{\'e}non, Mathieu},
   author={Xu, Ping},
   title={From Atiyah classes to homotopy Leibniz algebras},
   journal={Comm. Math. Phys.},
   volume={341},
   date={2016},
   number={1},
   pages={309--349},
}

\bib{CXX}{article}{
   author={Chen, Zhuo},
   author={Xiang, Maosong},
   author={Xu, Ping},
   title={Atiyah and Todd classes arising from integrable distributions},
   journal={J. Geom. Phys.},
   volume={136},
   date={2019},
   pages={52--67},
   issn={0393-0440},
}

\bib{CM2019}{article}{
   author={Cueca, Miquel},
   author={Mehta, Rajan Amit},
   title={Courant Cohomology, Cartan Calculus, Connections, Curvature,
   Characteristic Classes},
   journal={Comm. Math. Phys.},
   volume={381},
   date={2021},
   number={3},
   pages={1091--1113},
}

\bib{GsM}{article}{
   author={Gracia-Saz, Alfonso},
   author={Mehta, Rajan Amit},
   title={Lie algebroid structures on double vector bundles and
   representation theory of Lie algebroids},
   journal={Adv. Math.},
   volume={223},
   date={2010},
   number={4},
   pages={1236--1275},
}

\bib{HBJ}{book}{
   author={Hirzebruch, Friedrich},
   author={Berger, Thomas},
   author={Jung, Rainer},
   title={Manifolds and modular forms},
   series={Aspects of Mathematics, E20},
   note={With appendices by Nils-Peter Skoruppa and by Paul Baum},
   publisher={Friedr. Vieweg \& Sohn, Braunschweig},
   date={1992},
   pages={xii+211},
   isbn={3-528-06414-5},
}

 \bib{Kap}{article}{
   author={Kapranov, Mikhail M.},
   title={Rozansky-Witten invariants via Atiyah classes},
   journal={Compositio Math.},
   volume={115},
   date={1999},
   number={1},
   pages={71--113},
}

   \bib{Kon}{article}{
   author={Kontsevich, Maxim},
   title={Deformation quantization of Poisson manifolds},
   journal={Lett. Math. Phys.},
   volume={66},
   date={2003},
   number={3},
   pages={157--216},
}

\bib{KS}{article}{
   author={Kotov, Alexei},
   author={Strobl, Thomas},
   title={Characteristic classes associated to $Q$-bundles},
   journal={Int. J. Geom. Methods Mod. Phys.},
   volume={12},
   date={2015},
   number={1},
   pages={1550006, 26},
   issn={0219-8878},
}

\bib{Kub}{article}{
   author={Kubarski, Jan},
   title={Bott's vanishing theorem for regular Lie algebroids},
   journal={Trans. Amer. Math. Soc.},
   volume={348},
   date={1996},
   number={6},
   pages={2151--2167},
   issn={0002-9947},
}

 \bib{LMS}{article}{
   author={Lyakhovich, Simon L.},
   author={Mosman, Elena A.},
   author={Sharapov, Alexey A.},
   title={Characteristic classes of $Q$-manifolds: classification and
   applications},
   journal={J. Geom. Phys.},
   volume={60},
   date={2010},
   number={5},
   pages={729--759},
}

  \bib{LSX}{article}{
   author={Liao, Hsuan-Yi},
   author={Sti\'enon, Mathieu},
   author={Xu, Ping},
   title={Formality theorem for differential graded manifolds},
   journal={C. R. Math. Acad. Sci. Paris},
   volume={356},
   date={2018},
   number={1},
   pages={27--43},
   issn={1631-073X},
}

\bib{Manetti}{article}{
   author={Manetti, Marco},
   title={A relative version of the ordinary perturbation lemma},
   journal={Rend. Mat. Appl. (7)},
   volume={30},
   date={2010},
   number={2},
   pages={221--238},
   issn={1120-7183},
}

\bib{Manin}{book}{
   author={Manin, Yuri I.},
   title={Gauge field theory and complex geometry},
   series={Grundlehren der Mathematischen Wissenschaften [Fundamental
   Principles of Mathematical Sciences]},
   volume={289},
   edition={2},
   note={Translated from the 1984 Russian original by N. Koblitz and J. R.
   King;
   With an appendix by Sergei Merkulov},
   publisher={Springer-Verlag, Berlin},
   date={1997},
}

 \bib{Mehta}{article}{
   author={Mehta, Rajan Amit},
   title={Lie algebroid modules and representations up to homotopy},
   journal={Indag. Math. (N.S.)},
   volume={25},
   date={2014},
   number={5},
   pages={1122--1134},
   issn={0019-3577},
}

\bib{MSX}{article}{
   author={Mehta, Rajan Amit},
   author={Sti{\'e}non, Mathieu},
   author={Xu, Ping},
   title={The Atiyah class of a dg-vector bundle},
   language={English, with English and French summaries},
   journal={C. R. Math. Acad. Sci. Paris},
   volume={353},
   date={2015},
   number={4},
   pages={357--362},
   issn={1631-073X},
}

\bib{Molino}{article}{
   author={Molino, Pierre},
   title={Classe d'Atiyah d'un feuilletage et connexions transverses
   projetables. },
   language={French},
   journal={C. R. Acad. Sci. Paris S\'er. A-B},
   volume={272},
   date={1971},
   pages={A779--A781},
}

\bib{SSX}{article}{
   author={Seol, Seokbong},
   author={Sti\'{e}non, Mathieu},
   author={Xu, Ping},
   title={Dg manifolds, formal exponential maps and homotopy Lie algebras},
   journal={Comm. Math. Phys.},
   volume={391},
   date={2022},
   number={1},
   pages={33--76},
   issn={0010-3616},
}

\bib{SXsurvey}{article}{
   author={Sti\'{e}non, Mathieu},
   author={Xu, Ping},
   title={Atiyah classes and Kontsevich-Duflo type theorem for DG manifolds},
   conference={
      title={Homotopy algebras, deformation theory and quantization},
   },
   book={
      series={Banach Center Publ.},
      volume={123},
      publisher={Polish Acad. Sci. Inst. Math., Warsaw},
   },
   date={2021},
   pages={63--110},
}

 \bib{Vai}{article}{
   author={Va{\u\i}ntrob, Arkady},
   title={Lie algebroids and homological vector fields},
   language={Russian},
   journal={Uspekhi Mat. Nauk},
   volume={52},
   date={1997},
   number={2(314)},
   pages={161--162},
   issn={0042-1316},
   translation={
      journal={Russian Math. Surveys},
      volume={52},
      date={1997},
      number={2},
      pages={428--429},
   },
}

   \end{biblist}
\end{bibdiv}

\end{document}